\DeclareFontShape{T1}{lmr}{b}{sc}{<->ssub*cmr/bx/sc}{}
\DeclareFontShape{T1}{lmr}{bx}{sc}{<->ssub*cmr/bx/sc}{}
\DeclareMathAlphabet{\mathpzc}{OT1}{pzc}{m}{it}
\setlist[enumerate,1]{label=(\roman*)}
\newcommand{\cal}{\mathcal}
\newcommand{\Gq}{\Gal(\overline{\QQ}/\QQ)}
\newcommand{\NN}{{\mathbb N}}
\newcommand{\ZZ}{{\mathbb Z}}
\newcommand{\QQ}{{\mathbb Q}}
\newcommand{\CC}{{\mathbb C}}
\newcommand{\PP}{{\mathbb P}}
\renewcommand{\O}{{\cal O}}
\newcommand{\D}{{\cal D}}
\newcommand{\M}{{\cal M}}
\newcommand{\Spec}{{\operatorname{Spec}\kern 1pt}}
\newcommand{\Spf}{{\operatorname{Spf}\kern 1pt}}
\newcommand{\ord}{{\operatorname{ord}\kern 1pt}}
\renewcommand{\mod}{\mathrm{\,mod\,}}
\def\buildrel#1\over#2{\mathrel{\mathop{\kern 0pt#2}\limits^#1}}
\newcommand{\Ext}{{\mathrm{Ext}}}
\newcommand{\Hom}{{\mathrm{Hom}}}
\renewcommand{\H}{{\mathrm{H}}}
\renewcommand{\Im}{\operatorname{Im}\kern 1pt}
\newcommand{\coker}{\operatorname{coker}\kern 1pt}
\newcommand{\Gal}{{\mathrm{Gal}}}
\renewcommand{\epsilon}{\varepsilon}
\newcommand{\Aut}{{\mathrm{Aut}}}
\newcommand{\Frac}{\mathrm{Frac}}
\newcommand{\Ar}{{\mathbf{Ar}\kern 0.5pt}}
\newcommand{\cd}{\operatorname{cd}\kern 1pt}
\renewcommand{\tilde}{\widetilde}
\newcommand{\limind}{\mathop{\mathop\mathrm {lim}\limits_{\xrightarrow{\hskip 0.5cm}}}}
\newcommand{\cart}{\ar@{}[dr] |{\square}}
\newcommand{\comm}{\ar@{}[dr] |{\circlearrowleft}}
\newcommand{\incl}[1][r]{\ar@<-0.2pc>@{^(-}[#1] \ar@<+0.2pc>@{-}[#1]}
\numberwithin{equation}{section}
\newcounter{MyCount}[subsection]
\renewcommand{\theMyCount}{\thesubsection.\arabic{MyCount}}
\newcommand{\Mysubsubsection}{\mbox{}\newline %
\noindent\refstepcounter{MyCount}\textbf{\theMyCount.}\ }
\theoremstyle{plain}
\newtheorem{theo}{Theorem}[section]
\newtheorem{cor}[theo]{Corollary}
\newtheorem{prop}[theo]{Proposition}
\newtheorem{lem}[theo]{Lemma}
\newtheorem*{theoU}{Theorem}
\theoremstyle{definition}
\newtheorem{defi}[theo]{Definition}
\theoremstyle{remark}
\newtheorem{rem}[theo]{Remark}
	\title[Galois action and stack inertia]{On Galois action on inertia stack of moduli spaces of curves}
	\author[B.~Collas and S.~Maugeais]{Benjamin Collas and Sylvain Maugeais}
	\thanks{Supported by Prof.~Michael S.~Weiss' Humboldt Grant Professorship, DFG programme DE 1442/5-1 Bayreuth, and DFG SPP 1786}
\begin{document}	
	
	\begin{abstract}
		We establish that the geometric action of the absolute Galois group $\mathrm{Gal}(\bar\QQ/\QQ)$ on the étale fundamental group of moduli spaces of curves induces a Galois action on its  stack inertia subgroups, and that this action is given by \emph{cyclotomy conjugacy}. This result extends the special case of inertia without étale factorisation previously established by the authors. It is here obtained in the general case by comparing deformations of Galois actions. 
		
		Since the \emph{cyclic} stack inertia corresponds to the first level of the stack stratification of the space, this results, by analogy with the arithmetic of the Deligne-Mumford stratification, opens the way to a systematic Galois study of the stack inertia through the corresponding stratification of the moduli stack.
	\end{abstract}

	\maketitle
\noindent{\footnotesize\emph{Mathematics Subject Classification (2010).}
	11R32, 
	14H10, 
	14H30, 
	14H45.
}

\noindent{\footnotesize\emph{Keywords.} Algebraic fundamental group, inertia stack, special loci, deformation of curves, limits of Galois representation.}		
	
\section{Introduction}\label{sec:intro}
Let $\M_{g,[m]}$ denote the moduli space of curves of genus $g$ with $m$ unordered marked points endowed with its Deligne-Mumford stack structure over $\QQ$. For a $\QQ$-point of $\M_{g,[m]}$, the choice of a geometric point $\bar x\colon \Spec (\bar \QQ) \to\M_{g,[m]}\otimes\bar{\QQ}$ defines a geometric Galois representation 
\begin{equation}\label{eq:GalGeoRep}
	\rho_{\bar x}\colon\Gq\to \Aut[\pi_1^{et}(\M_{g,[m]}\otimes\bar{\QQ},\bar x)]
\end{equation} 
whose description has been extensively studied following A.~Grothendieck's seminal program \cite{GRO97} in terms of \emph{the Deligne-Mumford stratification} \cite{KNUII} of its stable compactification $\bar{\M}_{g,[m]}$, see \cite{DEL69}. This approach deals essentially with the schematic structure of $\M_{g,[m]}$ by considering the Galois action on the divisorial inertia groups of the boundary of $\bar{\M}_{g,[m]}$, see \cite{NAKA99,NAKA97} ; the arithmetic of the stratification resulted in the development of Grothendieck-Teichmüller theory as initiated by V.~Drinfel'd and Y.~Ihara -- see for example \cite{DRI90,Nakamura2000}.

Let $I_{\mathcal M}$ be the inertia stack of $\M_{g,[m]}$ that classifies the automorphism of curves, i.e. whose objects over a $\QQ$-scheme $S$ consist of pairs $(x, \gamma)$ with $x\in \M_{g,[m]}(S)$ and $\gamma\in\textrm{Aut}_S(x)$. The fibre Over an algebraic closed point $\bar x\colon \Spec (\bar{\QQ})\to \M_{g,[m]}$ gives a finite group $I_{\bar x}=I_{\mathcal M}\times_{\M_{g,[m]}} \Spec(\bar{\QQ})$ called the \emph{stack inertia group of $\bar x$}. This group is isomorphic to the automorphisms group of the curve $\bar x$. The fact that $I_{\bar x}$ injects into $ \pi_1^{et}(\M_{g,[m]}\otimes\bar{\QQ},\bar x)$, proven in \cite{NOOHI04}, raises the questions of the definition and the description of the global geometric $\Gq$-action of Eq.~\eqref{eq:GalGeoRep} on the local stack inertia groups $I_{\bar x}$ of $\M_{g, [m]}$.

The $\Gq$-action on cyclic stack inertia groups gained some focus initially in genus $0$ via Grothendieck-Teichmüller theory \cite{LS97}, then in higher genus with Galois considerations \cite{NAKTSU03}, \cite{Colg1} \S 3.

\bigskip

The main result of the present article follows \cite{Colg0,Colg1,ColMau} and provides an answer to these questions in the case of cyclic stack inertias $I_{\bar x}$, see Theorem \ref{theo:GalActStrg}:
\begin{theoU}[A]
	For any cyclic stack inertia group $I=\mathopen<\gamma\mathclose>$ of $\M_{g,[m]}$, there exists a geometric Galois representation $\rho_{\vec s}$ which induces a $\Gq$-action on $I$ given by $\chi$-conjugacy, i.e. for $\sigma\in\Gq$:
	\[
	\rho_{\vec s}(\sigma).\gamma=\delta_{\sigma}\,\gamma^{\chi(\sigma)}\,\delta_{\sigma}^{-1}\quad \text{for } \delta_{\sigma}\in\pi_1^{et}(\M_{g,[m]}\otimes\bar{\QQ}), 
	\]
	where $\chi\colon\Gq\to\widehat{\ZZ}^{*}$ denotes the cyclotomic character.

\end{theoU}

The present approach follows that of \cite{ColMau} using irreducible components of special loci of the form $\M_{g, [m]}(G)$ -- locus of points $x\in\M_{g,[m]}$ whose geometric stack inertia group $I_{\bar x}$ contains a subgroup isomorphic to $G$ -- which defines a \emph{stack inertia stratification of $\M_{g, [m]}$}, see \cite{DOUAI06}. The key ingredients are the Deligne-Mumford compactification and the arithmetic notion of tangential base point: first to define a $\Gq$-action $\rho_{\vec s}^I\colon \Gq\to \Aut(I)$ that is compatible to a tangential version $\rho_{\vec s}$ of Eq.~\eqref{eq:GalGeoRep}, then to extend the cyclotomy result of ibid.~from the case of stack inertia without étale factorisation (see \S \ref{subsub:totRam} for definition) to the general case.

\bigskip

The definition of an intrinsic \emph{local Galois action on $I_{\bar x}$ within $\pi_1^{et}(\M_{g, [m]}\otimes \bar{\QQ})$} for general Deligne-Mumford stacks is indeed tedious: denoting $\mathcal Z$ the connected component of $\M_{g, [m]}$ containing $\bar x$, and $K$ its field of definition, one obtains an outer action $\Gal(\bar K/K)\to \mathrm{Out}(I)$ \emph{modulo a certain geometric monodromy group only} -- see \cite{LV18} Proposition 2.17. In the case of the cyclic inertia of $\M_{g, [m]}$, we bypass this difficulty by the use of $\QQ$-tangential base points (denoted by $\vec s$) -- obtained from formal neighbourhoods of $\QQ$-points of $\bar{\M}_{g,[m]}$ \cite{IHNAK97}, see \S \ref{subsub:qTBP} for a general stack definition --  and by explicit properties of deformation of $G$-curves \cite{EKE95}, see \S \ref{subsub:degGcurve}. Following a stack version of Grothendieck-Murre formalism of the tame fundamental group \cite{GRO71} -- see in \S \ref{sub:Tbpt}, this leads to some $\Gq$-tangential representations $\rho_{\vec s}$ that induce proper \emph{stack inertia $\Gq$-actions $\rho_{\vec s,\bar x}^I\colon\Gq\to \Aut[I]$}, see \S \ref{subsub:defGalAcIn}.

The property that irreducible components of cyclic special loci are Deligne-Mumford stacks defined over $\QQ$ (see \cite{ColMau}, Proposition 3.12 and Theorem 4.3) ensures their global $\Gq$-invariance by $\rho_{\vec s}$, hence the $\Gq$-stability of the conjugacy class of $\gamma$. While this property is sufficient to establish the cyclotomy result for the first non-trivial cases -- when $G$ is of prime order, or the $G$-action is without étale factorisation (see Proposition \ref{prop:GalActTotRam} then Corollary \ref{theo:GalActTotRam})--, it is not for the general $\M_{g, [m]}(\ZZ/m\ZZ)$. The extension of the cyclotomy result relies, first on the construction of a specific $G$-deformation of smooth curve to the boundary of $\bar{\M}_{g,[m]}$ (Theorem \ref{theo:GoodGAct}), then on the existence of $\Gq$-tangential compatible Knudsen morphisms between the stratas $\M_{g-1,[m]+2}$ and $\bar{\M}_{g,[m]}$ (Proposition \ref{cor:ClutGqComp}), and finally on a specialization result for stack inertia groups, see \S \ref{subsub:specI}. This process takes the name of \emph{inertial limit Galois action} in \S \ref{subsubsec:opening}.

\bigskip

The result of Theorem (A), as well as the use of the inertial limit Galois action in the study of the stack inertia stratification of $\M_{g,[m]}$, strengthens the analogy between the arithmetic of the Deligne-Mumford stratification and of the stack inertia stratifications, which suggests further developments along this direction, see \S \ref{subsubsec:opening}. Theorem (A) also supports a positive answer to the anabelian Question 8.5 of \cite{LOC11}:

\medskip

{\itshape
	If a $\Gq$-action on $\pi_1(\M_{g,[m]}\otimes\bar{\QQ})$ is given by $\chi$-conjugacy on a protorsion element, is this element conjugate to a finite stack inertia one?
}

\medskip

We refer to ibid.~for further motivations and for the original formulation in terms of Dehn twists in the mapping class group $\widehat{\Gamma}_{g,[m]}\simeq \pi_1^{et}(\M_{g,[m]}\otimes\QQ)$, as well as to \cite{NAK90} Theorem 3.4 for the divisorial analog for curves that motivates this question.

\section{Geometry at Infinity of Special Loci}
Let $G$ be a finite group and $\mathcal M_{g,[m]}(G)$ the special loci associated to $G$, i.e. the Deligne-Mumford substack of $\mathcal M_{g,[m]}$ classifying families of smooth and proper marked curves of genus $g$ whose automorphisms group admits a subgroup isomorphic to $G$, see \cite{ColMau} \S2. In case $G$ is cyclic, a certain type of degeneracy of curves in $\mathcal M_{g,[m]}(G)$ to stable curves is sought in the boundary of $\mathcal M_{g,[m]}$: one of the main result is that any smooth $G$-curve admits a degeneracy to an irreducible singular curve with $G$-action whose normalisation is \emph{without étale factorisation} (i.e. the associated $G$-cover does not factorise through a non-trivial étale cover.)

\subsection{Deformation of stable $G$-Curves}\label{sec:Deformation}

For curves endowed with a $G$-action, the analogue of the stable curves \cite{DEL69} is given by \emph{stable $G$-curves} \cite{EKE95}, i.e. stable curves endowed with an \emph{admissible action}, whose definition is recalled below in case of a \emph{cyclic group} -- see also \cite{BERO07} \S 4.1.1.

\begin{defi}[Admissible action]
	Let $G$ be a cyclic group acting faithfully on a semi-stable curve $\mathcal C / S$. For $S$ the spectrum of an algebraically closed field, the action of $G$ is said to be admissible if, for every singular point $P \in \mathcal C$ with stabilizer $G_P$, the two characters of $G_P$ on the branches at $P$ are each other's inverse. For $S$ general, the action is admissible if it is so on every geometric fibre.
\end{defi}

Denoting by $\Omega^1_{C/k}$ the sheaf of relative Kähler differentials, the cohomological theory that controls the $G$-equivariant deformation functor $\textrm{Def}_{C,G}$ is given by the $G$-equivariant $\Ext^{i}_G(\Omega_{C/k}, \O_C)$, see \cite{EKE95} Proposition 2.1. 
It then follows from Schlessinger's theory \cite{SCH06} that the equivariant deformation functor $\textrm{Def}_{C,G}$ associated to a stable $G$-curve is pro-representable by a complete local ring $R_{C,G}$.

\begin{theo}[\cite{EKE95} -- Prop. 2.1--2.2]\label{theo:Eke}
	Let $C$ be a stable $G$-curve over a field $k$ of characteristic $0$ endowed with a $G$-admissible action, and let $R_{C,G}$ be its universal deformation ring with residue field $k$ and field of fraction $K$. Then $R_{C,G}$ is formally smooth over $k$, and its generic point corresponds to a smooth curve over $K$.
\end{theo}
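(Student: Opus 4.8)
The plan is to realize $R_{C,G}$ as the pro-representing object of the equivariant deformation functor $\Defo_{C,G}$, which sends an Artinian local $k$-algebra $A$ with residue field $k$ to the set of isomorphism classes of flat $G$-equivariant deformations of $C$ over $A$, and then to deduce both smoothness statements from a vanishing of obstructions. First I would verify Schlessinger's conditions (H1)--(H4) for $\Defo_{C,G}$: left-exactness on fibre products and finite-dimensionality of the tangent space follow from the coherence of the relevant $\Ext$-groups on the proper curve $C$, so that $\Defo_{C,G}$ is pro-representable by a complete local Noetherian $k$-algebra $R_{C,G}$ with residue field $k$. Formal smoothness of $R_{C,G}$ over $k$ is then equivalent to the unobstructedness of $\Defo_{C,G}$, i.e. to the vanishing of its obstruction space.

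Since $\car k=0$ and $G$ is finite, the Reynolds averaging operator makes the functor of $G$-invariants exact on $\QQ$-vector spaces, so the tangent and obstruction spaces of $\Defo_{C,G}$ are the invariant parts $\Ext^1(\Omega_C,\O_C)^G$ and $\Ext^2(\Omega_C,\O_C)^G$ of the ordinary deformation groups. To control the latter I would use the local-to-global spectral sequence
\[
E_2^{p,q}=\H^p\bigl(C,\ext^q(\Omega_C,\O_C)\bigr)\Longrightarrow \Ext^{p+q}(\Omega_C,\O_C).
\]
Because $C$ is nodal, its singularities are planar (local complete intersections of embedding codimension one), whence $\ext^q(\Omega_C,\O_C)=0$ for $q\geq 2$, while $\ext^1(\Omega_C,\O_C)$ is a skyscraper supported at the nodes. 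As $\dim C=1$ forces $\H^p=0$ for $p\geq 2$, the spectral sequence collapses to give $\Ext^2(\Omega_C,\O_C)=\H^1\bigl(C,\ext^1(\Omega_C,\O_C)\bigr)=0$, the last vanishing since $\ext^1$ lives on points. Taking $G$-invariants preserves this, so the obstruction space is zero and $R_{C,G}$ is formally smooth over $k$.

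For the generic fibre it remains to see that the universal deformation smooths every node $G$-equivariantly. The five-term exact sequence yields
\[
0\to \H^1(C,\mathcal{T}_C)\to \Ext^1(\Omega_C,\O_C)\to \H^0\bigl(C,\ext^1(\Omega_C,\O_C)\bigr)\to 0,
\]
using $\H^2(C,\mathcal{T}_C)=0$ for surjectivity. At a node $P$ with stabiliser $G_P$, the one-dimensional stalk of $\ext^1(\Omega_C,\O_C)$ — the smoothing parameter $xy$ in local coordinates $x,y$ on the two branches — transforms under $G_P$ by the \emph{product} of the two branch characters. The admissibility hypothesis forces these to be inverse to one another, so the product is trivial and the smoothing parameter lies in $\H^0(C,\ext^1)^{G_P}$, hence contributes to the $G$-invariant part. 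Consequently the universal $G$-deformation carries a direction smoothing each $G$-orbit of nodes, and the curve over the generic point of $\Spec R_{C,G}$ is a \emph{smooth} $G$-curve over $K$; a smooth proper curve has $\H^2(C,\mathcal{T}_C)=0$, so its equivariant deformation functor is again unobstructed, giving formal smoothness of the deformation at the generic point over $K$.

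The delicate point is the local analysis at the nodes: one must check that the $G_P$-representation on the smoothing parameter is precisely the product of the branch characters, and that admissibility (inverse characters) is exactly the condition rendering this product trivial — this is the geometric content that permits equivariant smoothing and is where the hypothesis is genuinely used. The remaining care is the equivariant bookkeeping, namely that passage to $G$-invariants commutes with the spectral sequence and with the formation of obstruction classes; this is harmless in characteristic $0$ by averaging, but should be set up cleanly within the Schlessinger framework.
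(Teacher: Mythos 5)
Your proof is correct and follows essentially the same route as the paper's: the paper does not reprove this statement but quotes it from \cite{EKE95}, and the machinery it sketches immediately afterwards --- equivariant $\Ext$-groups, the local--global principle, and the decomposition $R_{C,G}\simeq R_{glo}\widehat\otimes k[[q_1,\ldots,q_\ell]]$ --- is exactly what you spell out via characteristic-zero averaging, the vanishing $\Ext^2(\Omega_C,\O_C)=0$ for a nodal curve, and the observation that admissibility makes the node-smoothing parameters $G$-invariant. One small correction: pro-representability (rather than a mere hull) does not follow from coherence of the $\Ext$-groups alone but from stability of $C$, which kills infinitesimal automorphisms, i.e.\ $\H^0(C,\hom(\Omega_C,\O_C))=0$; since the theorem already posits the universal ring $R_{C,G}$, this does not affect your smoothness arguments.
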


Following \cite{TUF93}, there is no obstruction to infinitesimal lifting; one also obtains a \emph{local-global principle} for tame $G$-covering:
\[
	0\to \H^1_G(C, \Theta_C) \to \Ext^1_G(\Omega^1_{C/k}, \O_C) \to \bigoplus_{I} \Ext^1_G(\widehat{\Omega}_{C/k, P_i}, \widehat{\O}_{C, P_i}) \to 0.
\]
where $\Theta$ is the tangent sheaf and the direct sum is over a set of representatives $\{P_i\}_I$ of $sing(C)/G$. The ``local'' contributions are given by the deformations of the $P_i$, each of them being of dimension $1$.

The universal deformation ring thus identifies to
\[
R_{C,G} \simeq R_{glo} \widehat \otimes k\llbracket q_1, \ldots, q_M\rrbracket,\, \text{where}
\]
\begin{enumerate}
	\item $M$ is the number of singular and ramification points of $C$,
	\item $R_{glo}$ is a formally smooth $k$-algebra of finite dimension,
\end{enumerate} 
by \cite{BERO07} Equation (40). Denoting by $g'$ the genus of $C/G$, one recovers that $\dim (R_{C,G})=3g'-3+b$ with $b$ the degree of the branch points divisor.

\bigskip

In the $G$-stable compactification $\bar{\M}_{g,[m]}(G)$ of $\M_{g,[m]}(G)$, the choice of some deformation parameters $\mathbf{q}=\{q_1,\dots,q_{3g'-3+b}\}$ of $R_{C,G}$ provides a formal neighbourhood $\Spec k\llbracket \mathbf{q}\rrbracket \to \bar{\M}_{g,[m]}(G)$.

\subsection{The Case of $G$-curves without étale factorisation}\label{sub:GDefGenType}
In the following, denote by $G$ a cyclic group of order $n\in \NN$ and fix a generator $\gamma$ of $G$. A a degeneration result is established for $G$-covers in terms of their associated branch data $\bf kr$ by building a specific stable marked $G$-curve and controlling the branching data through $G$-equivariant deformation. The process relies on a rigidification of  Hurwitz data, the $\gamma$-type, first applied to the case of unmarked, then to marked curves.

\Mysubsubsection 
Let $C/k$ be a $G$-curve over a field  $k$ containing $n$-th roots of unity. Then $C\to C/G$ is étale locally  given by an equation of the form
\[
y^n=\prod_I(x-\alpha_i)^{k_i}
\] 
and  the order of the stabilizer group of $\alpha_i$ is given by $n/\gcd(n,k_i)$. Denote by ${\bf k}=\{k_1,\dots,k_{\nu}\}\in (\ZZ/n\ZZ)^{\nu}$ these Hurwitz data associated to the $G$-cover $C\to C/G$. The \emph{$\gamma$-type} of a point of $C$ gives a way to recover the Hurwitz data from local informations.

\begin{defi}[$\gamma$-type]\label{defi:GType}
	Let $k$ be an algebraically closed field of characteristic $0$, let $G$ be a cyclic group of order $n$, $\gamma\in G$ a generator and $\zeta \in k$ a fixed primitive $n$-th root of unity. Let $C/k$ be a complete smooth curve endowed with a $G$-action and $P \in C$ be a closed point with non-trivial stabiliser under the action of $G$. 
	
	The point $P$ is said to be of $\gamma$-type $\zeta$ if, for a uniformising parameter $u$ of $C$ at $P$ we have
	\begin{equation*}
	\gamma^\ell(u)=\zeta^\ell\, u\ \mod u^2 \quad \textrm{for all $\ell \in \ZZ$ such that } \gamma^\ell(P)=P.
	\end{equation*}
	We denote by $\mathrm{type}_{\gamma}(P)$ the $\gamma$-type of a point $P\in C$.
\end{defi}

The $\gamma$-type of a point is independent of the choice of the uniformising parameter $u$. The following lemma gives a link between the local $\gamma$-type and the Hurwitz data of the cover $C\to C/G$, and is used in the next section to build a stable $G$-curve by gluing two points of \emph{inverse $\gamma$-types}.

\begin{lem}\label{lem:unifZeta}
	Let $C/k$ be a complete smooth curve endowed with a $G$-action, and denote by $\{P_i\}_I$ the ramification points of $C\to C/G$ with Hurwitz data $\mathbf{k}=\{k_i\}_I$. Then there exists $\zeta\in k$ such that for all $i\in I$
	\begin{equation*}
	\mathrm{type}_{\gamma}(P_i)=\zeta^{j_i\,\frac{n}{\ord(k_i)}}
	\end{equation*}
	where $j_i$ is the inverse of $k_i\,\frac{\ord(k_i)}{n}$ modulo $\ord(k_i)$.
\end{lem}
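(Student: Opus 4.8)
The plan is to reduce to a local computation at each ramification point $P_i$ and to produce a single primitive $n$-th root of unity $\zeta$ that governs the $G$-action at all of them simultaneously. Since $\mathrm{type}_\gamma(P_i)$ depends only on how the stabiliser $G_{P_i}$ acts on the cotangent line $\mathfrak m_{P_i}/\mathfrak m_{P_i}^2$, it is an \'etale-local invariant, and the whole statement is local on $C/G$. The uniform $\zeta$ comes from Kummer theory at the level of function fields: the extension $k(C)/k(C/G)$ is cyclic of degree $n$, so $k(C)=k(C/G)(y)$ with $y^n=f\in k(C/G)$, and $\gamma$ acts on the global Kummer generator by $\gamma^*y=\zeta y$ for a primitive $n$-th root $\zeta$ lying in $k$. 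This single $\zeta$ is what appears in the statement, and it is the same at every $P_i$ precisely because $y$ and $\zeta$ are global.

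I would then fix $P_i$ and compute. Set $d_i=\gcd(n,k_i)=n/\ord(k_i)$, $m_i=\ord(k_i)$ and $\kappa_i=k_i/d_i$, so that $\gcd(\kappa_i,m_i)=1$; near $P_i$ the model is $y^n=x^{k_i}\cdot(\text{unit})$, with $x$ pulled back from the quotient. Normalising gives a uniformiser $t$ at $P_i$ with $x=t^{m_i}$ and $y=t^{\kappa_i}$ up to units, and choosing B\'ezout coefficients $a_i\kappa_i+b_im_i=1$ writes this uniformiser as the monomial $t=y^{a_i}x^{b_i}$. The stabiliser is $G_{P_i}=\langle\gamma^{d_i}\rangle$, the smallest power of $\gamma$ fixing $P_i$. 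From $\gamma^*y=\zeta y$ and $\gamma^*x=x$ one gets $(\gamma^{d_i})^*t\equiv\zeta^{a_id_i}\,t\pmod{t^2}$, so the generator of $G_{P_i}$ acts on the cotangent line by $\zeta^{a_id_i}$, which is the value recorded by $\mathrm{type}_\gamma(P_i)$ in Definition~\ref{defi:GType}.

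It remains to match exponents. Being a B\'ezout coefficient, $a_i\equiv\kappa_i^{-1}\pmod{m_i}$, and since $\kappa_i=k_i\,\ord(k_i)/n$ this is exactly $a_i\equiv j_i\pmod{m_i}$ with $j_i$ the inverse prescribed in the statement. As $n=d_im_i$, replacing $a_i$ by $j_i$ alters the exponent $a_id_i$ only by a multiple of $n$, whence $\zeta^{a_id_i}=\zeta^{j_id_i}=\zeta^{j_i\,n/\ord(k_i)}$, as claimed. Since $\gcd(j_i,m_i)=1$ this is a primitive $m_i$-th root of unity, consistent with the faithful action of the cyclic group $G_{P_i}$ of order $m_i$ on a one-dimensional tangent space.

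The routine Kummer calculation is not the real difficulty; the two points to watch are the global coherence of $\zeta$ and the appearance of the modular inverse $j_i$. The first is handled cleanly by working with the global Kummer generator of $k(C)/k(C/G)$ rather than with separate local coordinates, so that $\gamma$ acts by one scalar at every $P_i$ and the local $\gamma$-types become comparable across the ramification points. The second is the genuinely arithmetic step, and the main obstacle to getting the exponent exactly right: the inverse $j_i$ enters because the uniformiser is the monomial $y^{a_i}x^{b_i}$, so reading off the eigenvalue of $\gamma^{d_i}$ forces the inversion of $\kappa_i=k_i\,\ord(k_i)/n$ modulo $m_i=\ord(k_i)$.
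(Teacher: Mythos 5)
Your proposal is correct and follows the same overall strategy as the paper's proof: Kummer theory gives a global generator $y$ of $k(C)/k(C/G)$ with $\gamma(y)=\zeta y$, which is the source of the uniform $\zeta$, and at each $P_i$ one builds a uniformiser as a monomial in $y$ and a parameter pulled back from the quotient, with exponents supplied by a B\'ezout identity, and reads off the character on the cotangent line. In fact your execution of the local step is \emph{more careful} than the paper's own. In your notation ($d_i=\gcd(n,k_i)=n/\ord(k_i)$, $m_i=\ord(k_i)$, $\kappa_i=k_i/d_i$), the paper writes the decomposition $an+k_ij_i=n/\ord(k_i)$ -- which is exactly your identity $a_i\kappa_i+b_im_i=1$ multiplied by $d_i$, with $(a_i,b_i)=(j_i,a)$ -- but then claims that $u=y^{j_i\,n/\ord(k_i)}w^{a}$ is a uniformising parameter at $P_i$ and computes the action of $\gamma$ on it. When the stabiliser is proper, i.e.\ $d_i>1$, this breaks down: the valuation of the paper's $u$ at $P_i$ is $j_id_i\kappa_i+am_i=1+j_i\kappa_i(d_i-1)\neq 1$, so it is not a uniformiser, and moreover $\gamma$ itself does not fix $P_i$, so only the action of the stabiliser generator $\gamma^{d_i}$ -- which is precisely what you compute -- is meaningful there. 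Your uniformiser $y^{a_i}x^{b_i}$ and the conclusion that $\gamma^{d_i}$ acts on the cotangent line by $\zeta^{a_id_i}=\zeta^{j_id_i}=\zeta^{j_i\,n/\ord(k_i)}$ are correct, and for the only points on which the paper later uses the lemma (the glued points with data $\pm\ell$ prime to $n$, hence $d_i=1$, in Theorem \ref{theo:GoodGAct}) the two computations coincide.

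One caveat, which is a defect of Definition \ref{defi:GType} rather than of your argument: read literally, that definition makes $\mathrm{type}_{\gamma}(P_i)$ an element $\zeta'$ with $(\zeta')^{d_i}$ equal to the eigenvalue of $\gamma^{d_i}$, i.e.\ $\zeta^{j_i}$ up to a factor in $\mu_{d_i}$, rather than the eigenvalue itself. Under that literal reading the formula of the lemma is itself false for $d_i>1$: take $n=4$, $k_i=2$, so $m_i=d_i=2$ and $j_i=1$; the stabiliser generator $\gamma^{2}$ acts by $-1=\zeta^{2}$, and no square root of $-1$ equals $\zeta^{2}$. Your identification of the type with the character of the stabiliser generator on the cotangent line is the reading under which the lemma is true, is consistent with Remark \ref{rem:ActionTangentSpace}, and is the only property used afterwards via \eqref{eq:comptype}; so your proof establishes the statement in the sense in which it is intended and applied.
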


Note that for $a \in \ZZ/n\ZZ$, the element $a\frac{\ord(a)}{n}$ is well defined in $\ZZ/\ord(a) \ZZ$.

\begin{proof}
	By Kummer theory, the morphism $\pi\colon C \to C/G$ is given over the étale locus by an equation of the form $y^n=f(x)$ and the action of $G$ is given by $\gamma(y)=\zeta y$. Let $w$ be a uniformising parameter at $\pi(P_i)$ in $C/G$ so that up to a $n$-th power $y^n=w^{k_i}t$ -- where $t$ is an invertible element. 
	
	Writing a decomposition $an+k_i j_i = \frac{n}{\ord k_i}$, the element 
	\begin{equation*}
	u=y^{j_i\frac{n}{\ord k_i}} w^a
	\end{equation*} is a uniformising parameter of $C$ at $P_i$ and we have
	\begin{align*}
	\gamma(u)&=\gamma(y)^{j_i\frac{n}{\ord k_i}} w^a\\
	& =\zeta^{j_i\frac{n}{\ord k_i}}\, y^{j_i\frac{n}{\ord k_i}} w^a\\
	\gamma(u)&=\zeta^{j_i\frac{n}{\ord k_i}} u,
	\end{align*}
	hence the corresponding $\gamma$-type of $P_i$.
\end{proof}

\begin{rem}\label{rem:ActionTangentSpace}
	Once a generator $\gamma$ of $G$ and a primitive $n$-th root $\zeta$ are fixed, the Hurwitz data $\mathbf{k}$ are read locally through the action of $G$ on the tangent space of the ramification points. It is then possible to compute the genus of $C/G$ using Riemann-Hurwitz formula. 
\end{rem}

\Mysubsubsection \label{subsub:degGcurve}
The degeneration of $G$-equivariant curves to the boundary of $\mathcal M_{g}$ is studied, first in the case of unmarked curves.

\begin{theo}\label{theo:GoodGAct}
	Let $g, g'\ge 1$ integers, $G=\ZZ/n\ZZ$, ${\bf k}=(k_1,\dots,k_{\nu})\in (\ZZ/n\ZZ)^\nu$ satisfying \begin{align}
	&2g-2=n(2g'-2)+\sum_i (\ord(k_i)-1)\frac{n}{\ord(k_i)}\label{eq:GoodGAct1}\\
	&\sum_i k_i=0.\label{eq:GoodGAct2}
	\end{align}
	For all $\ell \in (\ZZ/n\ZZ)^*$ there exists a singular curve $C_{\ell}/k$ of genus $g$ endowed with a $G$-admissible action, with $C_{\ell}/G$ of genus $g'$, and such that:
	\begin{enumerate}
		\item the normalisation of $C_{\ell}$ is of genus $g-1$ and its quotient by $G$ has Hurwitz data $(k_1,\dots,k_{\nu}, \ell, -\ell)\in (\ZZ/n\ZZ)^{\nu+2}$;\label{it:GoodGAct1}
		\item the generic $G$-equivariant deformation of $C_{\ell}$ is smooth and has $\bf k$ for Hurwitz data.\label{it:GoodGAct2}
	\end{enumerate}
\end{theo}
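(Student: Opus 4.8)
The plan is to realise $C_{\ell}$ as the degeneration obtained by \emph{admissibly gluing} two totally ramified points on a smooth $G$-cover of genus $g-1$, and then to recover $\bf k$ by smoothing the resulting node $G$-equivariantly. I would first produce the normalisation $\tilde C_{\ell}$ and only afterwards the nodal curve $C_{\ell}$, so that part \ref{it:GoodGAct1} is built into the construction and part \ref{it:GoodGAct2} becomes a deformation-theoretic statement.

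First I would construct a smooth connected $G=\ZZ/n\ZZ$-curve $\tilde C_{\ell}$ of genus $g-1$ whose quotient $\tilde C_{\ell}/G$ has genus $g'-1$ and whose cover $\tilde C_{\ell}\to \tilde C_{\ell}/G$ has branching data $(k_1,\dots,k_{\nu},\ell,-\ell)$. Such a cyclic cover exists by the Riemann existence theorem applied to a base curve of genus $g'-1\ge 0$ (using $g'\ge 1$): connectedness follows because $\ell\in(\ZZ/n\ZZ)^{*}$ generates $G$, while the compatibility relation for the branch monodromies is exactly $\sum_i k_i+\ell+(-\ell)=\sum_i k_i=0$, i.e. \eqref{eq:GoodGAct2}. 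A Riemann--Hurwitz computation for this cover, using that $\ord(\pm\ell)=n$ so that each of the two extra points contributes $(n-1)\frac{n}{n}=n-1$, shows that the genus of $\tilde C_{\ell}$ is $g-1$ precisely when \eqref{eq:GoodGAct1} holds; this both fixes the numerology and confirms the genus claims for the quotient.

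Next I would pin down the two totally ramified points $P,P'\in\tilde C_{\ell}$ lying over the branch points with data $\ell$ and $-\ell$. Since $\ell$ is a unit these points are fixed by all of $G$, and Lemma \ref{lem:unifZeta} computes their $\gamma$-types as $\zeta^{\ell^{-1}}$ and $\zeta^{-\ell^{-1}}$, which are mutually inverse. I then form $C_{\ell}$ by gluing $P$ to $P'$ into a single node. Because both points are $G$-fixed this gluing is $G$-equivariant and the node is $G$-invariant with stabiliser all of $G$; the two characters of $G$ on the branches at the node are precisely the two inverse $\gamma$-types, so the action on $C_{\ell}$ is admissible in the sense of the definition above. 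Gluing two points on the connected curve $\tilde C_{\ell}$ raises the arithmetic genus by one, giving $C_{\ell}$ of genus $g$, and the induced gluing of $P$ and $P'$ downstairs produces $C_{\ell}/G$ of genus $(g'-1)+1=g'$. This is exactly part \ref{it:GoodGAct1}.

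Finally, for part \ref{it:GoodGAct2} I would invoke Theorem \ref{theo:Eke}: the $G$-equivariant deformation functor of $C_{\ell}$ is pro-representable by the formally smooth ring $R_{C,G}\simeq R_{glo}\widehat\otimes k[[q_1,\dots,q_{\ell}]]$ and its generic deformation is smooth. The coordinate attached to the node deforms the local model $xy=0$ to $xy=t$; since $\gamma$ acts by the inverse characters $\zeta^{\ell^{-1}}$ and $\zeta^{-\ell^{-1}}$ on $x$ and $y$, the product $xy$ is $G$-invariant and the smoothing is $G$-equivariant. For $t\ne 0$ the fixed-point equations $\zeta^{\ell^{-1}}x=x$ and $\zeta^{-\ell^{-1}}y=y$ force $x=y=0$, which is impossible on $xy=t$, so the $G$-action is free near the smoothed node. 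Hence the two ramification points of $\gamma$-type $\zeta^{\pm\ell^{-1}}$, that is with Hurwitz data $\pm\ell$, disappear while the remaining points $P_1,\dots,P_{\nu}$ persist, so the generic smooth deformation is a genus-$g$ $G$-curve with branching data exactly $\bf k$. I expect the main obstacle to be precisely this last step: transferring the abstract formal smoothness supplied by Theorem \ref{theo:Eke} into the concrete statement that the generic deformation simultaneously smooths the node and carries the prescribed Hurwitz data. Controlling this requires the local--global principle for $\Ext^{\bullet}_G(\Omega_{C},\O_C)$ recalled above, together with the $\gamma$-type bookkeeping of Lemma \ref{lem:unifZeta} and Remark \ref{rem:ActionTangentSpace}, which let one read the global branching data off the local $G$-action and thereby guarantee that smoothing the node removes exactly the $\pm\ell$ contributions and nothing else.
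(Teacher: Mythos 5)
Your proposal is correct and takes essentially the same route as the paper: build the smooth genus $g-1$ cover with Hurwitz data $({\bf k},\ell,-\ell)$, glue the two totally ramified points of inverse $\gamma$-types (Lemma \ref{lem:unifZeta}) into an admissible nodal $G$-curve, and smooth it $G$-equivariantly via Theorem \ref{theo:Eke}, reading the generic Hurwitz data off the local $\gamma$-types. The only minor differences are that the paper obtains the cover from the Leray exact sequence in \'etale cohomology rather than the Riemann existence theorem, and that your explicit freeness computation at the smoothed node makes explicit what the paper handles implicitly through the $\gamma$-type bookkeeping at the persisting ramification points.
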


The construction below also illustrates the control of the Hurwitz data along $G$-equivariant deformations.

\begin{proof}
	Let $E_0/k$ be a smooth curve of genus $g'-1$ over an algebraically closed field $k$, and let $({\bf k}, \ell, -\ell)\in (\ZZ/n\ZZ)^{\nu+2}$. By Proposition 3.7 \cite{ColMau} there exists a $G$-equivariant cover $E_1 \to E_0$ with Hurwitz data $({\bf k},\ell, -\ell)$. Moreover $E_1$ is of genus $g-1$ since
	\begin{align*}
	n(2(g'-1)-2)+\sum_i (\ord(k_i)-1)\frac{n}{\ord(k_i)}+2(n-1)&=2g-4\\
	&=2(g-1)-2.
	\end{align*}
	Let $\{P_1, \ldots, P_\nu\}$ denote the ramification points with Hurwitz data $\{k_1, \ldots,  k_\nu\}$ and $\{P'_1, P'_2\}$ the points with data $\{\ell,-\ell\}$.
	
	Following lemma \ref{lem:unifZeta}, there exists $\zeta$ such that the $\gamma$-type of $P'_i$ ($i=1,2$) verify:
	\begin{equation}
	\label{eq:comptype}
	\text{type}_{\gamma}(P'_1).\text{type}_{\gamma}(P'_2)=1.
	\end{equation}
	Let $C_{\ell}$ be the curve obtained from $E_1$ by gluing $P'_1$ and $P'_2$ as a point $P'$ as in Fig. \ref{fig:CurveGen1} below. As $\ell$ is prime to $n$, the points $P'_1$ and $P'_2$ are both fixed under $G$ so that the curve $C_{\ell}$ is endowed with a $G$-action. Moreover, this action is admissible thanks to \eqref{eq:comptype}, and it satisfies Property \eqref{it:GoodGAct1} of the theorem since $E_1$ is the normalisation of $C_{\ell}$.
	
	\begin{figure}[H]
		\centering
		\def\svgwidth{0.75\textwidth}
		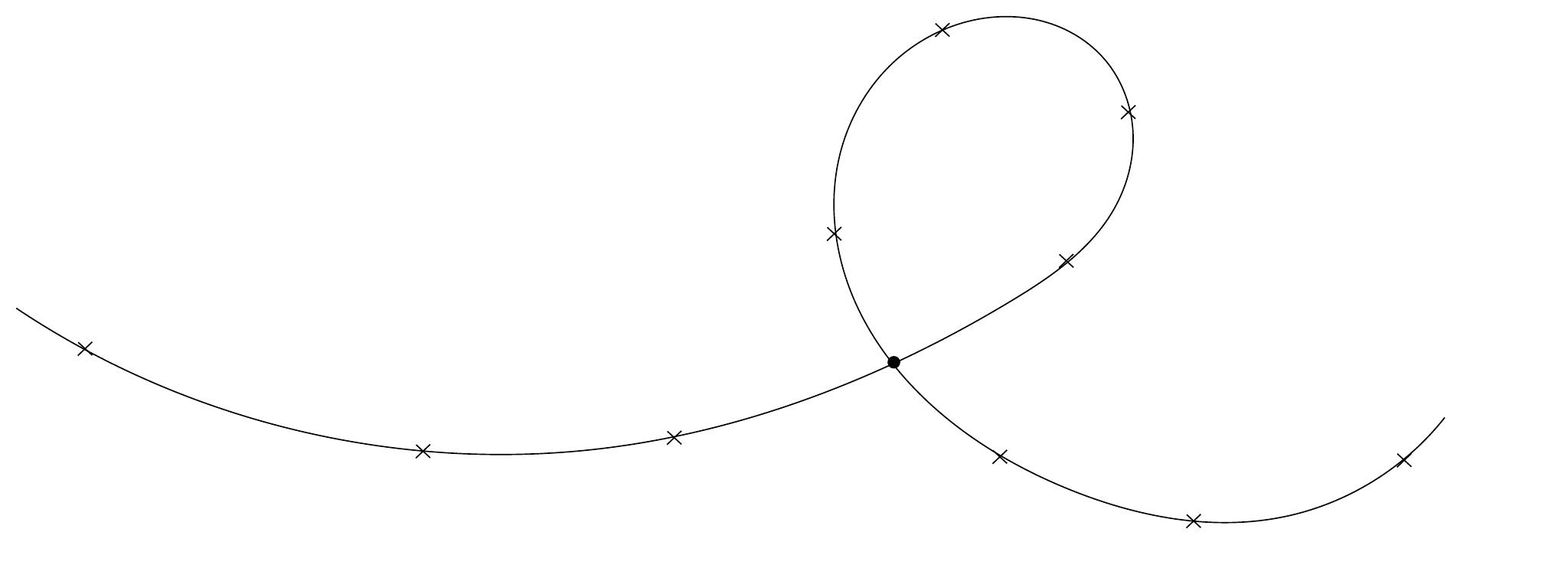
		\caption{Curve $C_\ell$ obtained by gluing $P'_1$ and $P'_2$.}\label{fig:CurveGen1}
	\end{figure}

	By Theorem \ref{theo:Eke}, there exists a $G$-curve $\mathcal C$ over a complete local ring $R$ of residue field $k$, with special fibre $C_{\ell}$ and a generic fibre that is smooth of genus $g$. Let $\gamma$ be a generator of $G$ and $\zeta\in k$ a primitive $n$-th root of unity as fixed by Lemma \ref{lem:unifZeta}. For $P_i\in C_{\ell}$ a ramification point, the action of $\gamma$ on the local ring $\mathcal O_{C_{\ell},Pi}\simeq k[[u_i]]$ is given by $\gamma(u_i)=\zeta^{j_i}u_i$ up to a change of parameter $u_i$. Moreover, there exists a lifting $\tilde P_i$ of $P_i$ such that the action of $G$ on $\mathcal C$ is given in a formal neighbourhood of $\tilde P_i$ by the same formula on the ring $R[[u_i]]$, up to another change of parameter $u_i$. Following Remark \ref{rem:ActionTangentSpace}, the Hurwitz data of the generic fibre of $\mathcal C/R$ is then $\bf k$ as it can be read on $u_i$ through the $\gamma$-type. Finally, a vanishing cycle computation proves that no ramification point of the generic fibre of $\mathcal C/R$ specialises to $P'$, so that $C_{\ell}$ satisfies Property \eqref{it:GoodGAct2}.
\end{proof}

The theorem above is still valid with the assumption $\bf k=\emptyset$, so that a smooth curve with $G$-action of étale type can be built, which specialises to $C_{\ell}$ with only one singular point and whose normalisation has no étale factorisation.

\Mysubsubsection\label{subsub:GDefGenTypeMarked} 
In the case of curves \emph{with $m$ marked points}, i.e. endowed with a horizontal $G$-equivariant Cartier divisor $D$ of degree $m$, the Hurwitz data $\bf k$ are replaced by the branch data $\bf kr$-- see Definition 3.9 in \cite{ColMau}: the \emph{branch data of a curve} $C\in \mathcal{M}_{g,[m]}(G)$  is a couple ${\bf kr}=({\bf k} ,{\bf r})$ where $\bf k$ is a Hurwitz data and ${\bf r}=(r_1,\dots,r_n)$ is a $n$-uple given by:
\begin{equation*}
r_i=\#\{y\in D/G,\text{the branching data at } y \text{ is equal to } i \mod\ n\}
\end{equation*}
where a $n$-th root of unity is fixed. 

\medskip

Let $m'$ denote the degree of the divisor $D/G$. In addition to Equations \eqref{eq:GoodGAct1} and \eqref{eq:GoodGAct2}, we assume that the branch data  also satisfy
\begin{align}
&m=\sum_{i}r_i\gcd(i,n)\\
&m'=\sum_i r_i.
\end{align}

We now state our $G$-deformation result in its complete form.

\begin{cor}\label{Cor:GoodActPoints}
	For any generic point $\eta \in \M_{g, [m]}(G)$, whose corresponding curve satisfies the branch data relations above, there exists a specialisation $z \in \bar\M_{g, [m]}(G)$ of $\eta$ such that the normalisation of the curve corresponding to $z$ has genus $g-1$ and is without étale ramification.
\end{cor}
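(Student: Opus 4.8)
The plan is to adapt the $G$-equivariant degeneration of Theorem \ref{theo:GoodGAct} to the marked setting introduced in \S\ref{subsub:GDefGenTypeMarked}. A generic point $\eta\in\M_{g,[m]}(G)$ corresponds to a smooth marked $G$-curve whose irreducible component of the special locus is labelled by its branching data ${\bf kr}=({\bf k},{\bf r})$, following the combinatorial description of \cite{ColMau}. Since $\eta$ is represented by an actual smooth $G$-curve of genus $g$, its Hurwitz part $\bf k$ satisfies the Riemann--Hurwitz relation \eqref{eq:GoodGAct1} and the relation $\sum_i k_i=0$ of \eqref{eq:GoodGAct2}; these are exactly the hypotheses under which the construction applies, so I would feed this data into it.

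Concretely, I would fix any $\ell\in(\ZZ/n\ZZ)^{*}$ and carry out the gluing of Theorem \ref{theo:GoodGAct} at a pair of points $P'_1,P'_2$ with Hurwitz data $\ell,-\ell$ whose $\gamma$-types multiply to $1$, now keeping track of the horizontal $G$-equivariant divisor $D$ realising the $\bf r$-part of the branching data. This produces a stable marked $G$-curve $C_{\ell}$ of genus $g$ whose generic $G$-equivariant deformation $\xi$ is smooth with branching data $\bf kr$, and whose normalisation $E_1$ has genus $g-1$ and Hurwitz data $({\bf k},\ell,-\ell)$. Setting $z=[C_{\ell}]\in\bar\M_{g,[m]}(G)$, the genus assertion is immediate, and the specialisation follows from the deformation: $\xi$ shares the branching data $\bf kr$ with $\eta$ and hence lies in the same irreducible component, so $\eta$ -- being its generic point -- specialises to $\xi$, which in turn specialises to $z$.

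It remains to see that $E_1$ is without étale factorisation. As $\gcd(\ell,n)=1$, the added point $P'_1$ satisfies $\ord(\ell)=n$, so its stabiliser in $E_1\to E_1/G$ is all of $G$, i.e. $P'_1$ is totally ramified. A cyclic $G$-cover has no non-trivial étale factorisation precisely when the stabiliser subgroups of its ramification points generate $G$, the maximal étale subcover being $E_1/H$ with $H$ the subgroup they generate. Here a single ramification point already has full stabiliser, so $H=G$ and $E_1$ carries no étale factorisation, as required.

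The genuinely new work, and the main obstacle, lies in the marked bookkeeping: one must arrange the horizontal divisor $D$ so that the marked points remain disjoint from the node obtained by gluing $P'_1$ and $P'_2$, verify that admissibility at this node -- guaranteed in the unmarked case by the $\gamma$-type identity $\mathrm{type}_{\gamma}(P'_1)\cdot\mathrm{type}_{\gamma}(P'_2)=1$ of Lemma \ref{lem:unifZeta} -- persists for the marked $G$-action, and confirm that $C_{\ell}$ is a stable marked $G$-curve so that $z$ genuinely lies in $\bar\M_{g,[m]}(G)$. Once the $\bf r$-part of the branching data is shown to be unaffected by the gluing and the deformation, the argument reduces cleanly to the unmarked computation already carried out in Theorem \ref{theo:GoodGAct}.
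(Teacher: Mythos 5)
Your proposal is correct and takes essentially the same route as the paper's own proof, which likewise deduces the corollary from the description of the irreducible components of $\bar\M_{g,[m]}(G)$ by the branching data ${\bf kr}$ of \cite{ColMau} combined with the gluing-and-deformation construction of Theorem \ref{theo:GoodGAct} carried out for a marked $G$-curve. The details you add -- the specialisation chain $\eta \rightsquigarrow \xi \rightsquigarrow z$ through the irreducible component labelled by ${\bf kr}$, and the observation that the glued points with data $\ell,-\ell$ prime to $n$ are totally ramified so that the stabilisers generate all of $G$ and the normalisation has no \'etale factorisation -- are precisely what the paper leaves implicit.
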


\begin{proof}
	This is a direct consequence of the description of the set of irreducible components of $\M_{g, [m]}(G)$ by the set of the branching data $\bf kr$ as given in \cite{ColMau}, and of Theorem \ref{theo:GoodGAct}: for a given $\bf kr$, one constructs explicitly a $G$-equivariant marked curve as in the proof above.
\end{proof}

For an algebraic definition of $\bf kr$ (resp. $\bf k$) for families of curves in terms of étale cohomology and for examples see \cite{ColMau} \S 3.2 (resp. \S 3.1). This reference also contains a discussion about the non-canonicity of $\bf k$ and $\bf kr$ relatively to the choice of a generator $\gamma$ of $G$ and a primitive $n$-th root of unity $\zeta\in\mu_n$.

\bigskip

This $G$-deformation result is completed in \S \ref{subsub:finalProof} at the level of automorphism groups of curves. This is then a key ingredient to reduce the $\Gq$-action by cyclotomy to the case of stack inertia without étale factorisation.

\section{Galois Action at Infinity}\label{sec:GaloisInfinity}

Generalising \cite{ZOOAR11} and \cite{NAKA99}, we give the definition of Galois actions at infinity attached to a normal crossing divisor of a generic Deligne-Mumford algebraic stack, and then discuss their compatibility through Knudsen morphisms. We clarify this result in the case of $\mathcal M_{g,[m]}$, leading to non-canonical comparisons of $\Gq$-representations of the fundamental groups of $\mathcal{M}_{g-1,[m]+2}$ and $\mathcal{M}_{g,[m]}$. 

\subsection{Tangential Base Points}\label{sub:Tbpt}
Adapting \cite{ZOOAR11} to the case of a non geometric base point, we define a tangential base point for a Deligne-Mumford algebraic stack $\mathfrak M$ in terms of \emph{the tame fundamental group} of $\mathfrak M$ along a normal crossing divisor, which is adapted from the case of scheme in \cite{GRO71}: for a normal crossing divisor $\mathcal D\to \mathfrak{M}$,  the category $\mathfrak{Rev}^{\mathcal D}(\mathfrak{M})$ of tamely ramified covers of $\mathfrak{M}$ along $\mathcal D$ is defined via the scheme category $ \mathfrak{Rev}^{\mathcal D}(X)$ by pull-back along a presentation $X\to \mathfrak M$ -- see \cite{ZOOAR11} \S3. While being based on Zoonekynd's approach for \emph{geometric} tangential points, the present approach is here developed for rational base points and as such implies the definition of a Galois action.

\Mysubsubsection \label{subsub:qTBP}
Let $k$ be a field and $\mathfrak M$ a Deligne-Mumford algebraic $k$-stack. A point $x \in \mathfrak M(\Spec k)$ is said to have a \emph{Nisnevich neighbourhood} if there exists an étale morphism $f\colon V \to \mathfrak M$ with $V$ a scheme, and a point $v \in f^{-1}(x)$ with residue field $k$. For a point $x$ having a Nisnevich neighbourhood, we define \emph{the local ring at $x$ in $\mathfrak M$}, denoted by $\O_{\mathfrak M, x}^h$, as 
\[
\O_{\mathfrak M, x}^h=\limind_{(v, V)} \O_{V, v}
\]
where the limit is taken over the couples $(v, V)$ as above, and $\O_{\mathfrak M, x}^h$ is the Henselization $\O_{V,v}^h$ for any Nisnevich neighbourhood.
Note that for a stack $\mathfrak M$ over $k$ and a smooth point $x$, the completion of $\O_{\mathfrak M, x}^h$ can be identified to $k[[t_1, \ldots t_n]]$ via the choise of a system of paremeters $t_1, \ldots, t_n \in \O_{\mathfrak M, x}^h$.

Let $\mathfrak M$ be a $k$-stack, $x\in\mathfrak M(\Spec k)$ a smooth point which is supposed to have a Nisnevich neighbourhood and ${\bf t}=\{t_1\ldots, t_n\}$ be a system of parameters of $\O_{\mathfrak M, x}^h$.
We define the \emph{Puiseux ring} of $\mathfrak M$ at $x$ with respect to $\bf t$ as the ring 
\[\tilde\O_{\mathfrak M, x}^{\bf t}=\limind_\ell \left(\widehat{\O_{\mathfrak M, x}^h}\widehat\otimes_k k^{sep}\right)[t_1^{1/\ell}, \ldots, t_n^{1/\ell}].\]

\begin{rem}
	In case of a \emph{geometric point $x$}, the local ring $\O_{\mathfrak M, x}^h$ in the construction of the Puiseux ring above, is replaced by the \emph{strict Henselization} $\O^{sh}_{\mathfrak M, x}$ which exists without condition -- see \cite{LMB00} Rem. 6.2.1.
\end{rem}

The following gives a class of points with Nisnevich neighbourhood, which includes the schematic points of $\mathfrak M$.
\begin{prop}\label{ex:Nisne}
	Let $\mathfrak M$ be a Deligne-Mumford $k$-algebraic stack and $x\in\mathfrak M(\Spec k)$. If $\Aut_k(x)$ is a constant group-scheme, then $x$ admits a Nisnevich neighbourhood.
\end{prop}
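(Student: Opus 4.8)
The plan is to produce an explicit étale chart of $\mathfrak M$ by a scheme through which the $k$-point $x$ factors, which is exactly what a Nisnevich neighbourhood amounts to. Indeed, unwinding the fibre product, giving a point $v\in f^{-1}(x)$ with residue field $k$ for an étale $f\colon V\to\mathfrak M$ with $V$ a scheme is the same datum as a lift of $x\colon\Spec k\to\mathfrak M$ to a $k$-rational point $v\in V(k)$. So the whole statement reduces to lifting $x$ through a suitable étale scheme chart.

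First I would record the two structural facts coming from the Deligne--Mumford hypothesis. Since the diagonal of $\mathfrak M$ is unramified, the automorphism group scheme $G:=\Aut_k(x)$ is finite and unramified over $k$, hence finite étale; by assumption it is moreover the constant group scheme attached to a finite group, still denoted $G$. Since $x$ is $k$-rational, the residual gerbe $\mathcal G_x\hookrightarrow\mathfrak M$ at $x$ is \emph{neutral}, with a tautological section given by $x$, so that $\mathcal G_x\cong[\Spec k/G]$.

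Next I would invoke the local structure (slice) theorem for Deligne--Mumford stacks: étale-locally about $x$, the stack $\mathfrak M$ is a quotient stack. Concretely, there exist an affine $k$-scheme $V$ with $G$-action, a point $v\in V$ over $x$, and an étale morphism $\phi\colon[V/G]\to\mathfrak M$ inducing an isomorphism on residual gerbes at $v$. The neutrality of $\mathcal G_x$ together with the constancy of $G$ is what allows $v$ to be chosen $G$-fixed with residue field $k(v)=k(x)=k$, i.e. a genuine $k$-rational fixed point $v\in V(k)^{G}$. Because $G$ is finite and constant, the projection $\pi\colon V\to[V/G]$ is a $G$-torsor, in particular finite étale, and its fibre over $x$ is the trivial torsor $G_k$, split by $v$; hence $x$ lifts along $\pi$ to the $k$-point $v$. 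Then the composite $f:=\phi\circ\pi\colon V\to\mathfrak M$ is étale, $V$ is a scheme, and $v\in f^{-1}(x)$ has residue field $k$, which is the asserted Nisnevich neighbourhood.

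The hard part is the middle step: securing the quotient chart $[V/G]\to\mathfrak M$ together with a $k$-rational $G$-fixed point lying over $x$. It is precisely here that both hypotheses are consumed — neutrality of the residual gerbe (from $x\in\mathfrak M(\Spec k)$) forces the torsor over $x$ to split, while constancy of $G$ makes $\pi\colon V\to[V/G]$ étale and keeps the fixed point $k$-rational. Once this chart is in place, the remaining assertions (étaleness of the composite and the residue-field computation) are formal. Finally I would remark that the schematic points of $\mathfrak M$ are the special case $G=\{1\}$, recovering the stated inclusion.
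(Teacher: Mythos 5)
Your opening reduction (a Nisnevich neighbourhood is the same as an \'etale scheme chart through which the $k$-point $x$ lifts) and everything \emph{after} the chart $[V/G]\to\mathfrak M$ is granted -- \'etaleness of $V\to[V/G]$ for $G$ finite constant, triviality of the fibre torsor split by $v$, \'etaleness of the composite -- are all correct. The genuine gap is exactly the step you yourself flag as ``the hard part'': the slice theorem you invoke does not exist in the form you need. The classical \'etale slice theorem for Deligne--Mumford stacks (as in \cite{LMB00}, or Abramovich--Vistoli) is stated and proved for \emph{geometric} points: it produces a chart $[V/G]\to\mathfrak M$ with a fixed point above $\bar x$ defined over the separably closed residue field. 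Over a non-closed field $k$, an \'etale chart a priori admits above $x$ only points whose residue fields are finite separable \emph{extensions} of $k$; producing a $k$-rational one is precisely the content of the proposition. The strengthened slice statement you use (a chart together with a $G$-fixed point $v$ with $k(v)=k$) visibly implies the proposition via your own concluding steps, so invoking it unproved is circular: the sentence ``the neutrality of $\mathcal G_x$ together with the constancy of $G$ is what allows $v$ to be chosen $G$-fixed with residue field $k$'' is an assertion of the needed theorem, not an argument for it. To turn it into an argument you would have to Galois-descend the geometric slice \emph{together with its fixed point}, i.e.\ trivialize a torsor-type obstruction, and that is exactly where the constancy hypothesis has to do real work.

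For comparison, the paper avoids slices entirely and argues by deformation theory: it considers the functor $F$ on artinian $k$-algebras of isomorphism classes of deformations of $x$, shows that unramifiedness of the diagonal (isomorphisms of deformations over an artinian base are rigid, i.e.\ determined by their restriction to the closed fibre) together with constancy of $\Aut_k(x)$ (so the relevant special-fibre isomorphisms are Galois-invariant) makes $F$ a sheaf for the \'etale topology, and then applies Theorem 10.10 of \cite{LMB00} to algebraize, obtaining an \'etale presentation with a $k$-point above $x$. That argument is characteristic-free and consumes exactly the stated hypotheses. A repaired version of your route does exist in the modern literature -- the refined local structure theorems of Alper--Hall--Rydh treat non-closed points and produce residually rational fixed points -- but those results postdate this paper, are far deeper than the proposition, and require tameness/linear reductivity of the stabilizer rather than mere constancy (in characteristic $p$ a constant group of order divisible by $p$ is not linearly reductive), so even citing them would not recover the statement under the hypotheses as given.
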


\begin{proof} Consider the functor $F\colon (k-\text{Art}) \to \text{Ens}$ defined on the Artinian $k$-algebras by the isomorphism classes
	of objects of $\mathfrak M$ which are deformations of points whose images are equal to that of $x$ in $\mathfrak M$. As
	$\Aut_k(x)$ is a constant group-scheme and the diagonal of $\mathfrak M$ is unramified, the functor $F$ is actually a sheaf for the
	étale topology on $(k-\text{Art})$. Theorem 10.10 of \cite{LMB00} then gives an étale presentation with a $k$-point above $x$, thus a Nisnevich neighbourhood of $x$.
\end{proof}	

\bigskip

Consider a normal crossing divisor $\mathcal D$ on $\mathfrak M$ whose support contains $x$, and let ${ t}_D=\{t_1, \ldots, t_n\}$ be a system of parameters 
of $\mathfrak M$ at $x$ such that $\mathcal D$ is given in an étale neighbourhood of $x$ by $t_1\cdots t_m=0$. A \emph{$k$-rational tangential base point} on $\mathfrak M\setminus \mathcal D$ at $x$ is then defined as a fibre functor in term of \emph{Puiseux ring}: 
\begin{defi}
	\label{defi:tbp}
	Let $\mathfrak M$ be a Deligne-Mumford $k$-stack, $x$ be a smooth $k$-point of $\mathfrak M$ having a Nisnevich neighbourhood, and $\D$ be a normal crossing divisor on $\mathfrak M$ with
	$t_{\D}$ a system of parameters of $\D$ at $x$. The $k$-rational tangential base point associated to $\vec s=(x, {t}_{\D})$ is defined as the functor
	\[\begin{array}{cccc}
	F_x^{t_{\mathcal D}}\colon\mathfrak{Rev}^{\D}(\mathfrak M) & \to & \mathfrak{Set} \\
	Y & \mapsto & \Hom_{\Frac(\mathfrak M)}( \Frac\ Y, \Frac(\tilde{\O}_{\mathfrak M, x}^{t_{\D}}))
	\end{array}\]
	where $\Frac$ denotes the ring of fraction.
\end{defi}
Unlike the classical Grothendieck-Murre theory, the base point is here supposed to belong to the normal crossing divisor.

\bigskip

The following result is essentially Theorem 3.7 of \cite{ZOOAR11}.
\begin{theo}\label{theo:RatTBPisFibFunc}
	Let $\mathfrak M$ be a Deligne-Mumford $k$-stack, $x$ be a smooth $k$-point of $\mathfrak M$ having a Nisnevich neighbourhood, $\D$ be a normal crossing divisor on $\mathfrak M$ and $t_{\D}$ a system of parameters of $\D$ at $x$.	Then the \emph{tangential base point} functor $F_{x}^{t_\D}$ is a \emph{fibre functor}. 
\end{theo}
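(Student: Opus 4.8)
The plan is to verify directly that $F_x^{t_\D}$ satisfies Grothendieck's axioms for a fibre functor on a Galois category — that it takes values in finite sets, commutes with finite projective and finite inductive limits, and is conservative — after reducing to the complete local situation, where the only feature absent from the geometric case of \cite{ZOOAR11} is the arithmetic part carried by the factor $k^{sep}$ in the Puiseux ring. First I would use that $\mathfrak{Rev}^{\D}(\mathfrak M)$ is defined by pull-back along a presentation and that, by hypothesis, $x$ admits a Nisnevich neighbourhood $f\colon V\to\mathfrak M$ with a $k$-rational point $v\in f^{-1}(x)$: the evaluation of $F_x^{t_\D}$ then only involves the henselian local ring $\O_{\mathfrak M,x}^h=\O_{V,v}^h$ and its completion $\widehat{\O_{\mathfrak M,x}^h}\cong k[[t_1,\dots,t_n]]$, in which $\D$ is cut out by $t_1\cdots t_m=0$. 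The statement thus becomes one about tamely ramified covers of a complete regular local scheme and no longer sees the stack.

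The heart of the argument is to show that $\Frac(\tilde\O_{\mathfrak M,x}^{t_\D})$ is separably closed for the tame covers along $\D$, so that every $Y\in\mathfrak{Rev}^{\D}(\mathfrak M)$ splits completely over it. By Abhyankar's lemma, a cover tamely ramified along $\{t_1\cdots t_m=0\}$ is, �tale-locally at $x$, the compositum of a Kummer cover obtained by adjoining roots $t_i^{1/\ell}$ for $1\le i\le m$ with a cover that is �tale at $x$. The direct limit over $\ell$ in the definition of $\tilde\O_{\mathfrak M,x}^{t_\D}$ trivialises the ramified Kummer factor, while the base change to $\widehat{\O_{\mathfrak M,x}^h}\widehat\otimes_k k^{sep}$ trivialises the residual �tale factor — the relevant residual extension being \emph{constant}, hence split by $k^{sep}$, precisely because the Nisnevich neighbourhood provides a $k$-rational point $v$ above $x$. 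I would conclude that $F_x^{t_\D}(Y)=\Hom_{\Frac(\mathfrak M)}(\Frac Y,\Frac(\tilde\O_{\mathfrak M,x}^{t_\D}))$ is finite, of cardinality equal to the generic degree of $Y$.

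Granting this, the remaining axioms are formal. Left-exactness is the universal property of the tensor product: the generic algebra of a fibre product of covers is $\Frac Y\otimes_{\Frac(\mathfrak M)}\Frac Z$, whence $\Hom$ from it into the field $\Frac(\tilde\O_{\mathfrak M,x}^{t_\D})$ is $F_x^{t_\D}(Y)\times F_x^{t_\D}(Z)$, with $F_x^{t_\D}(\mathfrak M)$ a single point. Finite coproducts are preserved because $\Frac$ of a disjoint union of covers is a finite product of fields and any homomorphism of such a product into a field factors through a unique projection; preservation of quotients by finite groups, of epimorphisms, and conservativity then all follow from the equality $\#F_x^{t_\D}(Y)=\deg Y$ together with the fact that a cover is determined by its generic fibre, by normality and faithfully flat descent along the presentation. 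I expect the main obstacle to be the middle step: establishing the complete splitting simultaneously in the ramified, arithmetic, and stacky directions. One must combine Abhyankar's lemma for the Kummer part with the $k^{sep}$-splitting of the constant residual covers, and check that the automorphism gerbe of $x$ contributes no additional non-constant residual extension — which is exactly what the existence of a $k$-rational point in the Nisnevich neighbourhood guarantees.
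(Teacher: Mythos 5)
Your proposal is correct, but it follows a genuinely different route from the paper's. The paper (quoting Theorem 3.7 of \cite{ZOOAR11}) argues by identification: $F_x^{t_\D}$ is shown to be isomorphic to the fibre functor of a geometric point $x'$ of $\mathfrak M$, namely the generic point of $\Spec\Frac(\tilde\O_{\mathfrak M,x}^{t_\D})$ --- this field being asserted to be an algebraic closure of $\Frac(\O_{\mathfrak M,x}^h)$ since $\mathrm{char}(k)=0$ --- and the conclusion is then delegated to the general theory of the \'etale fundamental group of Deligne--Mumford stacks, with no Galois-category axiom checked by hand. You instead never invoke a geometric point: your key lemma is the complete splitting of every cover in $\mathfrak{Rev}^{\D}(\mathfrak M)$ over the Puiseux field (Abhyankar's lemma in the Kummer directions, the base change $\widehat\otimes_k k^{sep}$ in the residual direction, the Nisnevich hypothesis ensuring residue field $k$ at the chart point), from which $\#F_x^{t_\D}(Y)=\deg Y$, and then you verify exactness and conservativity directly. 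The paper's route buys brevity and reuse of existing machinery; yours buys self-containedness and, importantly, precision on the one delicate point. Indeed, when $\dim\mathfrak M\geq 2$ the Puiseux field is \emph{not} algebraically closed --- $t_1+t_2$ factors into distinct primes in every $k^{sep}[[t_1^{1/\ell},t_2^{1/\ell}]]$, so $\sqrt{t_1+t_2}$ lies in no $\Frac(\tilde\O_{\mathfrak M,x}^{t_\D})$ --- nor is it algebraic over $\Frac(\O^h_{\mathfrak M,x})$ (the completion is transcendental over the henselization); what is true, and what either proof ultimately needs, is exactly your statement: the Puiseux field splits all covers tame along $\D$. Read precisely, the paper's isomorphism $F_x^{t_\D}\cong F_{x'}$ should be taken with $x'$ an algebraic closure of the Puiseux field, and the surjectivity of the natural comparison map is then precisely your Abhyankar splitting step; so your proposal supplies the content that the paper's sketch leaves implicit.
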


The proof of this theorem goes by showing that this functor is isomorphic to a functor defined by a geometric point $x'\in\mathfrak M$, which by the theory of étale fundamental group is then a fibre functor on the Galois category $\mathfrak{Rev}^{\D}(\mathfrak M)$. Here $x'$ is given by the generic point of $\Frac(\tilde{\O}_{\mathfrak M, x}^{t_\D})$, which is an algebraic closure of $\mathrm{Frac}(\O_{\mathfrak M, x}^h)$ by Puiseux Theorem since $\mathrm{char}(k)=0$.

\bigskip

In particular, this defines the \emph{arithmetic tame fundamental group based at a $k$-rational tangential base point} 
$\pi_1^\D(\mathfrak M;t_\D,x)=\mathrm{Aut}(F_x^{t_\D})$ as the automorphism group of 
the tangential fibre functor. By base change, this arithmetic fundamental group admits a \emph{geometric tame fundamental group} 
$\pi_1^{\D\times \bar k}(\mathfrak M\otimes \bar k;t_{\D\times \bar k},x)$.

\Mysubsubsection 
Consider the absolute Galois group $\textrm{Gal}(\bar k/k)$ of $k$. Through universal properties, the ring $\widehat{\O}_{\mathfrak M, x}^h\widehat{\otimes} \bar k$ inherits an action of $\textrm{Gal}(\bar k/k)$, and so does $\tilde{\O}_{\mathfrak M, x}^{t_{\D}}$ since the system of parameters $t_{\D}$ is defined over $k$. From this fact, it follows that the functors $F_{x}^{t_{\D}}$ and $\pi_1^\D(\mathfrak M;t_{\D},x)$ are both $\textrm{Gal}(\bar k/k)$-equivariant: 

\begin{prop}\label{prop:GalUnivHen} 
	Let $\mathfrak M$ be a $k$-algebraic stack, and $\vec s=(x,{t}_\D)$ be a $k$-rational tangential base point. Then there exists a $\textrm{Gal}(\bar k/k)$-action: 
	\[
	\rho_{\vec s}\colon\textrm{Gal}(\bar k/k)\longrightarrow\Aut[\pi_1^{\D\otimes \bar k}(\mathfrak M\otimes\bar k;t_{\D\times \bar k},x)]
	\]
	called \emph{a tangential $\textrm{Gal}(\bar k/k)$-representation.}
\end{prop}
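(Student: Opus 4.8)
The plan is to realise $\rho_{\vec s}$ as conjugation by a canonical section of the fundamental exact sequence of the tame fundamental group, the section being supplied precisely by the $k$-rationality of the tangential base point $(x,t_{\D})$. Writing $\Omega:=\Frac(\tilde{\O}_{\mathfrak M, x}^{t_{\D}})$ for the geometric point attached to the base point by Theorem \ref{theo:RatTBPisFibFunc} and the remark following it, I would promote the $\Gal(\bar k/k)$-action on $\Omega$ to a section of
\[
1 \longrightarrow \pi_1^{\D\times\bar k}(\mathfrak M\times\bar k; t_{\D\times\bar k}, x) \longrightarrow \pi_1^{\D}(\mathfrak M; t_{\D}, x) \xrightarrow{\ p\ } \Gal(\bar k/k) \longrightarrow 1,
\]
and then let $\rho_{\vec s}(\sigma)$ be conjugation by its value at $\sigma$ on the geometric (kernel) subgroup. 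This is the standard mechanism by which a rational base point upgrades an outer action to an honest one.

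First I would make the Galois action on the Puiseux ring precise. Since the system of parameters $t_{\D}=\{t_1,\dots,t_n\}$ lies in $\O_{\mathfrak M, x}^h$ and is defined over $k$, each $t_i$ is fixed by $\Gal(\bar k/k)$; declaring every formal root $t_i^{1/\ell}$ to be Galois-invariant is then compatible with the transition maps of the inductive limit defining $\tilde{\O}_{\mathfrak M, x}^{t_{\D}}$, and together with the tautological action on the coefficient field $k^{sep}$ this yields a $k^{sep}$-semilinear ring action of $\Gal(\bar k/k)$ on $\tilde{\O}_{\mathfrak M, x}^{t_{\D}}$, hence on $\Omega$. By construction this action fixes the subring $\widehat{\O_{\mathfrak M, x}^h}\otimes 1$, and therefore fixes the image of the function field $\Frac(\mathfrak M)$ inside $\Omega$; in other words each $\sigma$ is a $\Frac(\mathfrak M)$-algebra automorphism of $\Omega$.

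Next I would transport this action to the fiber functor $F_x^{t_{\D}}$, whose automorphism group is $\pi_1^{\D}(\mathfrak M; t_{\D}, x)$. For $\sigma\in\Gal(\bar k/k)$, post-composition with $\sigma$ sends $f\in F_x^{t_{\D}}(Y)=\Hom_{\Frac(\mathfrak M)}(\Frac Y,\Omega)$ to $\sigma\circ f$; because $\sigma$ is $\Frac(\mathfrak M)$-linear this again lies in $F_x^{t_{\D}}(Y)$, and naturality in $Y$ is immediate, so $s(\sigma):=(\sigma\circ(-))$ is a well-defined element of $\Aut(F_x^{t_{\D}})=\pi_1^{\D}(\mathfrak M; t_{\D}, x)$. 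One checks that $\sigma\mapsto s(\sigma)$ is a continuous homomorphism and that $p\circ s=\mathrm{id}$, since $p$ records exactly the action of $s(\sigma)$ on the constant covers $\Spec\bar k\to\Spec k$, i.e. on $\bar k\subset\Omega$. Conjugation by $s(\sigma)$ then preserves the geometric normal subgroup and yields the sought representation $\rho_{\vec s}\colon\Gal(\bar k/k)\to\Aut[\pi_1^{\D\times\bar k}(\mathfrak M\times\bar k; t_{\D\times\bar k}, x)]$.

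The step requiring the most care — and the crux of the whole construction — is the passage in the second paragraph: checking that fixing the roots $t_i^{1/\ell}$ produces a genuine, choice-independent semilinear action, and above all that $\sigma$ fixes $\Frac(\mathfrak M)$, so that $s(\sigma)$ is an honest natural automorphism of $F_x^{t_{\D}}$ lying over $\sigma$ rather than merely an outer symmetry. This is exactly where the hypothesis that $t_{\D}$ is $k$-rational enters, and it is what upgrades the a priori outer $\Gal(\bar k/k)$-action on $\pi_1^{\D\times\bar k}$ to the honest representation $\rho_{\vec s}$. The fundamental exact sequence itself I would take from the Grothendieck--Murre tame theory already in force for the geometrically connected $k$-stack $\mathfrak M$; absent such connectedness one may instead define $\rho_{\vec s}(\sigma)$ directly, as the automorphism of the geometric fiber functor induced by the base-change equivalence associated to $\sigma$ together with the identification of fiber functors coming from the action on $\Omega$.
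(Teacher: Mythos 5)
Your proof is correct and takes essentially the same route as the paper: the paper's own (terse) justification is precisely that the $k$-rationality of $t_{\D}$ lets $\Gal(\bar k/k)$ act on $\widehat{\O^h_{\mathfrak M,x}}\widehat\otimes_k \bar k$ and on the Puiseux ring $\tilde{\O}_{\mathfrak M,x}^{t_{\D}}$ (acting on coefficients, fixing the roots $t_i^{1/\ell}$), whence the fiber functor $F_x^{t_{\D}}$ and the group $\pi_1^{\D\times\bar k}(\mathfrak M\times\bar k;t_{\D\times\bar k},x)$ are $\Gal(\bar k/k)$-equivariant. Your packaging of this equivariance as a section of the arithmetic--geometric exact sequence acting by conjugation, with the direct fiber-functor definition as a fallback, is simply the explicit form of the same construction.
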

A tangential base point is functorial through base change over $k$, while it is not for general $k$-stacks morphisms. Indeed, there are some extra data and assumptions required to define \emph{a stack morphism between tangential base points}. Consider a representable morphism $f\colon \mathfrak N \to \mathfrak M$ between Deligne-Mumford $k$-stacks, $\D$ a normal crossing divisor on $\mathfrak M$, and $x\in \D(\Spec k)$ with $y\in f^{-1}(x)$ a $k$-rational point, and suppose that $x$ and $y$ are both smooth and admit a Nisnevich neighbourhood. 

Following Lemma 2.9 of \cite{ZOOAR11} there is an identification
\[
\Hom_{\Frac(\mathfrak M)}( \Frac\ Y, \Frac(\tilde{\O}_{\mathfrak M, x}^{t_{\D}})) = \Hom_{\mathfrak M}(Y,\tilde{\O}_{\mathfrak M, x}^{t_{\D}}),
\]
so that it is possible to define a tangential base point in terms of rings instead of fields. Suppose morevover that there are two systems of parameters $t_{f^* \D}=\{t'_1, \ldots, t'_n\}$ and $t_{\D}=\{t_1, \ldots, t_\ell\}$ respectively of $f^* \D$ at $y$ and of $\D$ at $x$ such that the induced morphism $f^h\colon \O^h_{\mathfrak M, x}\to \O^h_{\mathfrak N, y}$ sends $t'_j$ to $t_j$ or $0$. Then the $\textrm{Gal}(\bar k/k)$-equivariant natural transformation of functors
\[
F_f\colon F_{x}^{t_{\D}} \to F_{y}^{t_{f^*\D}},
\]
is obtained using the $\textrm{Gal}(\bar k/k)$-equivariant morphism
\[
\tilde f\colon \tilde\O^{t_{\D}}_{\mathfrak M, x}\to \tilde\O^{t_{f^* \D}}_{\mathfrak N, y}.
\]

\bigskip

In the two important cases of unramified and smooth morphisms, an extension property guarantees the following nearly ``functorial'' result.

\begin{prop}\label{prop:compMorph}
	Let $f\colon \mathfrak N \to \mathfrak M$ be a morphism of Deligne-Mumford $k$-stacks, $\D$ be a normal crossing divisor on $\mathfrak M$ such that $f^*\D$ is a normal crossing divisor on $\mathfrak N$, and let $x\in \D(\Spec k)$ and $y\in f^{-1}(x)$ be $k$-rational points such that both $x$ and $y$ are smooth and have Nisnevich neighbourhoods. 
	
	If $f$ is either smooth or unramified, then there exist regular systems of parameters $t_{f^* \D}$ and $t_{\D}$ of $f^* \D$ at $y$ and $\D$ at $x$, and a Galois equivariant morphism
	\[
	\pi_1^{f^* \D}(\mathfrak N; t_{f^*\D}, y) \to \pi_1^{\D}(\mathfrak M; t_{\D}, x).
	\]
\end{prop}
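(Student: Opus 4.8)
The plan is to deduce the statement from the functoriality established just above, so that all the work goes into producing, in each of the two cases, \emph{compatible} systems of parameters at $x$ and $y$. Recall the mechanism: if one can find $t_\D=\{t_1,\ldots,t_\ell\}$ of $\D$ at $x$ and $t_{f^*\D}=\{t'_1,\ldots,t'_n\}$ of $f^*\D$ at $y$ for which the local homomorphism $f^h\colon\O^h_{\mathfrak M,x}\to\O^h_{\mathfrak N,y}$ sends each $t_i$ either to a member of $t_{f^*\D}$ or to $0$, then $f^h$ induces a $\Gal(\bar k/k)$-equivariant morphism of Puiseux rings $\tilde f\colon\tilde\O^{t_\D}_{\mathfrak M,x}\to\tilde\O^{t_{f^*\D}}_{\mathfrak N,y}$, hence a Galois-equivariant natural transformation $F_f\colon F_x^{t_\D}\to F_y^{t_{f^*\D}}$ of fibre functors, and finally the desired morphism of tame fundamental groups upon taking automorphism groups. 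The equivariance is automatic, since the parameters are chosen over $k$ and the Henselisation and Puiseux constructions are natural in $\Gal(\bar k/k)$, as recorded in Proposition~\ref{prop:GalUnivHen}. Throughout I use that $x$ and $y$ are smooth $k$-points with Nisnevich neighbourhood, so their completed local rings are power series rings over $k$.

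\textbf{Smooth case.} Here I would use the local structure of a smooth morphism: passing to completions, the fibre direction splits off, so that $\widehat{\O_{\mathfrak N,y}^h}\cong\widehat{\O_{\mathfrak M,x}^h}[[s_1,\ldots,s_r]]$ with $r$ the relative dimension and with $s_1,\ldots,s_r$ defined over $k$. Starting from a system of parameters $t_\D=\{t_1,\ldots,t_n\}$ at $x$ adapted to $\D=\{t_1\cdots t_m=0\}$, the pullbacks $f^h(t_1),\ldots,f^h(t_n)$ form part of a regular system at $y$; completing them by $s_1,\ldots,s_r$ gives $t_{f^*\D}=\{f^h(t_1),\ldots,f^h(t_n),s_1,\ldots,s_r\}$, and $f^*\D$ is cut out by $f^h(t_1)\cdots f^h(t_m)=0$, a normal crossing divisor by hypothesis. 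Here each $t_i$ goes to a parameter and none to $0$, so the compatibility is immediate.

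\textbf{Unramified case.} Now the map on completions $\widehat{\O_{\mathfrak M,x}^h}\to\widehat{\O_{\mathfrak N,y}^h}$ is \emph{surjective}: since $\Omega^1_{\mathfrak N/\mathfrak M}=0$ one has $\m_x\O^h_{\mathfrak N,y}=\m_y$, and the residue fields agree as $x,y$ are $k$-rational. Both completed rings being regular, the kernel is generated by part of a regular system of parameters; I would therefore choose $t_\D$ at $x$ so that $f^h$ sends $t_1,\ldots,t_{n'}$ to a regular system $t'_1,\ldots,t'_{n'}$ at $y$ and $t_{n'+1},\ldots,t_n$ to $0$. The normal crossing hypothesis on $f^*\D$ forces the divisor-defining parameters $t_1,\ldots,t_m$ to survive -- none of them may map to $0$, else $f^*\D$ would fail to be a divisor -- so after reindexing $m\le n'$ and $f^*\D=\{t'_1\cdots t'_m=0\}$ is normal crossing. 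This yields the required compatible systems, the excess parameters mapping to $0$.

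\textbf{Main obstacle.} The delicate point is the unramified case, where one must \emph{simultaneously} diagonalise: arrange a single regular system $t_\D$ at $x$ that is adapted to $\D$ and whose kernel under $f^h$ is a coordinate subsystem, all while the surviving parameters still cut $f^*\D$ out as a normal crossing divisor. This is exactly the \emph{extension property} invoked before the statement, and it is where the hypothesis that $f^*\D$ be normal crossing on $\mathfrak N$ is essential: it prevents any branch of $\D$ from being collapsed and forbids a ``diagonal'' crossing from appearing downstairs. Once such compatible parameters are produced, the conclusion follows verbatim from the mechanism of the first paragraph, the resulting morphism being Galois-equivariant for the tangential representations of Proposition~\ref{prop:GalUnivHen}.
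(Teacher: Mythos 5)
Your proof is correct and follows essentially the same route as the paper: reduce to producing systems of parameters compatible under $f^h$, then use injectivity of the completed local homomorphism (completing the parameter system by tangent/fibre directions) in the smooth case, and its surjectivity in the unramified case. If anything, your treatment of the unramified case is slightly more careful than the paper's, which merely ``extracts'' a generating system from the images $f^{\#}(t_1),\ldots,f^{\#}(t_n)$ without spelling out that the discarded parameters must be sent to $0$ -- precisely the point you address via the kernel being generated by part of a regular system and the normal crossing hypothesis on $f^{*}\D$.
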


\begin{proof}
	Following the discussion above, it is sufficient to establish that there exist two systems of parameters $t_{\D}$ for $\D$ at $x$ and $t_{f^*\D}$ for $f^*\D$ at $y$ such that $f$ induces a morphism $f^h\colon \O^h_{\mathfrak M, x}\to \O^h_{\mathfrak N, y}$ that sends an element of $t_{\D}$ on an element of $t_{f^*\D}$ or $0$.
	
	If $f$ is unramified, the morphism $f^\#\colon \widehat\O^h_{\mathfrak M, x}\to \widehat\O^h_{\mathfrak N, y}$ is a surjection. Consider $t_1, \ldots, t_n$ a system of parameters of $\D$ in $\widehat\O^h_{\mathfrak M, x}$. As $f^*\D \not = \emptyset$, we have $f^h(t_1\cdots t_\ell)\not = 0$ so that it is possible to extract from $f^\#(t_1), \ldots, f^\#(t_n)$  a system of generators of $\widehat \O_{\mathfrak N, y}$ because $f^\#$ is formally unramified and induces an injection on tangent spaces.
	
	\medskip
	
	If $f$ is smooth, then the morphism $f^\#$ is injective and a system of parameters $t_1, \ldots, t_n$ for $\D$ in $\widehat\O^h_{\mathfrak M, x}$ can be completed into a system of parameters $t_1, \ldots, t_{n'}$ of $\widehat\O^h_{\mathfrak N, y}$ by picking up vectors in the tangent space.
\end{proof}

The lack of functoriality comes from the fact that \emph{there is no obvious choice of parameters}, which has the important consequence below.
\begin{rem}\label{Rem:GalNonFunc}
	A $k$-rational change of parameters $t_{\D}$ to $t'_{\D} $ -- or \emph{infinitesimal homotopic transformation} -- leads to two \emph{$k$-homotopically} equivalent $k$-rational base points $F_x^{t_\D} \simeq F_x^{t'_\D}$, but \emph{not to equivalent} $\textrm{Gal}(\bar k/k)$\emph{-actions} on the fundamental groups: the action on Puiseux series makes some Kummer character appear from the $N$-th roots of the involved rational coefficients.
\end{rem}

\Mysubsubsection\label{subsub:defGalAcIn}
Let us fix a Galois representation defined by the choice of a $k$-tangential base point $\vec s$ on $\mathfrak M$ as in Proposition \ref{prop:GalUnivHen}
\[
\rho_{\vec s}\colon\Gal(\bar k/k)\to \Aut[\pi_1^{\D\otimes \bar k}(\mathfrak M\otimes \bar k; \vec{s})]
\] 
. We are giving here an extra assumption to define naturally a $\Gal(\bar k/k)$-action on the stack inertia groups of $\mathfrak M$ using $\rho_{\vec s}$.

Consider a geometric point $w\colon \Spec(\bar k)\to \mathfrak M$  of $\mathfrak M$ and let $I_{\mathfrak M,w}=\Spec(\bar k)\prescript{}{w}{\times} I_{\mathfrak M}$ denote its stack inertia group of $2$-transformations, with $I_{\mathfrak M}=\mathfrak M \times_{\mathfrak M \times \mathfrak M} \mathfrak M$ denoting the inertia stack of $\M$. Since $\gamma \in I_{w}=I_{\mathfrak M, w}$ induces a transformation of the fibre functor $F_{w}$, this defines a morphism $\omega_{w} \colon I_{w} \to \pi_1(\mathfrak M\times\bar k, w)$ and $I_{w}$ is also called the group of \emph{hidden paths} of the étale fundamental group, cf. \cite{NOOHI04} \S 4.

Consider now $\bar z \colon \Spec(\bar K) \to \mathfrak M$ a geometric point, and choose an injection $\bar k \subset \bar K$. As any $\sigma \in \Gal(\bar k/k)$ can be extended to a $k$-automorphism $\tilde \sigma$ of $\bar K$, fix one such $\tilde\sigma$ and define $\tilde\sigma(\bar z)$ by base change. The choice of two étale paths from $\bar z$ to $\vec s$ and from $\tilde{\sigma}(\bar z)$ to $\vec s$, then defines morphisms 
\begin{align*}
\phi \colon \pi_1(\mathfrak M, {\bar z})\longrightarrow\pi_1^{\D}(\mathfrak M; \vec{s})\quad\text{and}\quad
\phi_{\tilde \sigma} \colon  \pi_1(\mathfrak M, \tilde\sigma(\bar z))\longrightarrow  \pi_1^{\D}(\mathfrak M; \vec{s}).
\end{align*}
Moreover, the compatibility between $\sigma$ and $\tilde\sigma$ induces a diagram
\begin{equation}
\label{diag:InertialAction}
\begin{tikzcd}[column sep=small]
I_{\bar z} \arrow{d}{\omega_{\bar z}}\arrow{rrr}{\tau \mapsto \tilde\sigma^{-1}\tau\tilde\sigma} & & & I_{\tilde\sigma(\bar z)}\arrow{d}{\omega_{\tilde{\sigma}(\bar z)}} \\
\pi_1(\mathfrak M\otimes \bar k, {\bar z}) \arrow{rd}{\phi} & & & \pi_1(\mathfrak M\otimes \bar k, \tilde{\sigma}(\bar z))\\ 
& \pi_1^{\D\otimes\bar k}(\mathfrak M\otimes \bar k; \vec{s}) \arrow{r}{\sigma} & \pi_1^{\D\otimes \bar k}(\mathfrak M\otimes \bar k; \vec{s})  \arrow{ru}{\phi_{\tilde \sigma}^{-1}} & 
\end{tikzcd}
\end{equation}
where the bottom line is the action by conjugacy defined by the tangential $\Gal(\bar k/k)$-action $\rho_{\vec s}$ on $\pi_1^{\D\times \bar k}(\mathfrak M\otimes \bar k, \bar z)$. This diagram is commutative \emph{up to conjugacy by a hidden path} $\epsilon$ from $\bar z$ to $\tilde\sigma(\bar z)$, i.e. a $2$-transformation $\xymatrix@C4em{\Spec(\bar K) \rtwocell^{\bar z}_{\tilde\sigma(\bar z)}{\epsilon} &\mathfrak M}$.

\bigskip

In case $\bar z$ is stable under $\Gal(\bar k/k)$ -- for example as in Proposition \ref{rem:action_intertielle} -- the choice of $\phi_{\tilde\sigma} = \phi$ for all $\sigma$ is possible and makes the Diagram \eqref{diag:InertialAction} strictly commutative, i.e. without any hidden path conjugacy by $\epsilon\colon\bar z\Rightarrow\sigma(\bar z)$. Since $\rho_{\vec s}$ sends the image $\omega_{\bar z}(I_{\bar z})$ into itself in the first line of Diagram \eqref{diag:InertialAction}, the tangential $\Gal(\bar k/k)$-action $\rho_{\vec s}$ defines a \emph{$\Gal(\bar k/k)$-stack inertia Galois action $\rho_{\vec s,\bar z}^I$}
\begin{equation}\label{eq:InGalSt}
	\rho_{\vec s,\bar z}^I\colon\Gal(\bar k/k)\longrightarrow \Aut[I_{\bar z}].
\end{equation}
Mover over, the action $\rho_{\vec s,\bar z}^I$ is induced by the canonical local $\Gal(\bar K/K)$-action $\rho_{\bar z}^I$, via the commutating diagramme:
\begin{equation}
	\xymatrix{
		\Gal(\bar K/K)\ar[r]^{\rho_{\bar z}^I}\ar[d]_\pi& \Aut[I_{\bar z}] \ar@{=}[d]\\
		\Gal(\bar k/k)\ar[r]^{\rho_{\vec s,\bar z}^I}& \Aut[I_{\bar z}]
}
\end{equation}
where $\pi\colon\Gal(\bar K/K)\to \Gal(\bar K\otimes \bar{k}/k)\simeq \Gal(\bar k/k)$ is surjective when $K/k$ is linearly disjoint from $\bar k/k$.

\begin{prop}\label{rem:action_intertielle}
	Let $z\colon\Spec(K)\to \mathfrak M$ be a $K$-point, $\bar z\colon \Spec (\bar K) \to \mathfrak M$ a geometric point above $z$ and $I_{\bar z}\to \pi_1(\mathfrak M\otimes \bar k,\bar z)$ its stack inertia, and suppose that $K/k$ is linearly disjoint from $\bar k/k$. Then any Galois representation $\rho_{\vec s}\colon\Gal(\bar k/k)\to \Aut[\pi_1^{\D\otimes \bar k}(\mathfrak M\otimes \bar k; \vec{s})]$ defined by a $k$-tangential base point $\vec s$ on $\mathfrak M$ defines a $\Gal(\bar k/k)$-action $\rho_{\vec s,\bar z}^I$ on $I_{\bar z}$ and this action coincide with the action $\rho_{\bar z}^I$ of $\Gal(\bar K/K)$ on $I_{\bar z}$ by conjugacy.
\end{prop}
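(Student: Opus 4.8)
The plan is to reduce everything to the commutativity of Diagram~\eqref{diag:InertialAction} established above, the only genuinely new ingredient being the linear disjointness hypothesis, which I will use to secure the stability of $\bar z$ under $\Gal(\bar k/k)$.

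First I would translate the hypothesis into a statement about Galois groups. Since the elements of $\bar k$ are algebraic over $k\subset K$, the subfield $\bar k\subset\bar K$ is stable under $\Gal(\bar K/K)$, so restriction defines a morphism $\Gal(\bar K/K)\to\Gal(\bar k/k)$; the linear disjointness of $K/k$ from $\bar k/k$ identifies $\Gal(K\bar k/K)$ with $\Gal(\bar k/k)$ and hence makes this restriction \emph{surjective}. Thus every $\sigma\in\Gal(\bar k/k)$ lifts to some $\tilde\sigma\in\Gal(\bar K/K)$, which in particular fixes $K$ pointwise. As $\bar z$ factors through the $K$-point $z\colon\Spec K\to\mathfrak M$ via the structural map $\Spec\bar K\to\Spec K$, and $\tilde\sigma$ fixes $K$, one checks that $\tilde\sigma(\bar z)=\bar z$ as morphisms to $\mathfrak M$, so the geometric point $\bar z$ is stable.

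With stability in hand the construction recalled immediately before the statement applies: we may take $\phi_{\tilde\sigma}=\phi$ for all $\sigma$, Diagram~\eqref{diag:InertialAction} becomes genuinely commutative, the tangential action $\rho_{\vec s}(\sigma)=\phi^{-1}\,\sigma\,\phi$ preserves $\omega_{\bar z}(I_{\bar z})$, and this produces the $\Gal(\bar k/k)$-action $\rho_{\vec s}^I$ on $I_{\bar z}$, which is the first assertion. For the comparison I would then read off the diagram directly: its top arrow $\tau\mapsto\tilde\sigma^{-1}\tau\tilde\sigma$ is exactly the transport of structure, that is, the natural action $\rho_{\bar z}^I(\tilde\sigma)$ of $\tilde\sigma\in\Gal(\bar K/K)$ on $I_{\bar z}=\Aut_{\bar K}(\bar z)$. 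Transporting the commutative square along $\phi\circ\omega_{\bar z}$ yields $\rho_{\vec s}^I(\sigma)=\rho_{\bar z}^I(\tilde\sigma)$; the qualification ``by conjugacy'' absorbs the two remaining choices, namely the \'etale path $\phi$ (another choice conjugates the action by the corresponding loop) and the lift $\tilde\sigma$ (two lifts differ by an element of $\Gal(\bar K/K\bar k)$, which modifies the action by an inner automorphism induced by a hidden path).

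The main obstacle I anticipate is this stability step: making precise, in the stacky fiber functor language, the sense in which $\tilde\sigma(\bar z)=\bar z$ and that the lift $\tilde\sigma$ may genuinely be chosen inside $\Gal(\bar K/K)$. This is exactly where the linear disjointness hypothesis is used, and it requires careful bookkeeping of the geometric points, their automorphism groups, and the chosen \'etale paths; once stability is secured, both the existence of $\rho_{\vec s}^I$ and its coincidence with $\rho_{\bar z}^I$ amount to a reading of Diagram~\eqref{diag:InertialAction}.
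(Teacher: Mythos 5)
Your proposal is correct and follows essentially the same route as the paper: linear disjointness gives the surjection $\Gal(\bar K/K)\to\Gal(\bar k/k)$, hence the stability of $\bar z$, after which one takes $\phi_{\tilde\sigma}=\phi$ and reads both the existence of $\rho_{\vec s}^I$ and its coincidence with $\rho_{\bar z}^I$ off the now genuinely commutative Diagram~\eqref{diag:InertialAction}. The paper's proof is terser -- it asserts the stability directly and compares the two actions as actions of $\Gal(\bar K/K)$ through that surjection -- but your added bookkeeping (the explicit lift $\tilde\sigma$ fixing $K$, and the absorption of the choices of $\phi$ and of the lift into conjugacy) only fills in details the paper leaves implicit.
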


\begin{proof} This follows from the discussion above. Since $K/k$ is linearly disjoint from $\bar k/k$, the $\bar k$-image of $\bar z\colon \Spec (\bar K) \to \mathfrak M$ is stable under the $\Gal(\bar k/k)$-action.  The tangential $\Gal(\bar k/k)$-action $\rho_{\vec s}\colon  \sigma \mapsto \phi^{-1}\, \sigma\, \phi$ on $\pi_1^{\D\otimes \bar k}(\mathfrak M\otimes \bar k, \bar z)$ then sends the image $I_{\bar z}$ into itself, and so induces an action $\rho_{\vec s,\bar z}^I$ of $\Gal(\bar k/k)$ on $I_{\bar z}$ according to the commutativity of the Diagram \eqref{diag:InertialAction}. 
	
	This proves furthermore, that $\rho_{\bar z}^I$ and $\rho_{\vec s}^I$, seen as an action of $\Gal(\bar K/K)$ through the surjection $\Gal(\bar K/K) \to \Gal(\bar k/k)$, define the same action on $I_{\bar z}$.
\end{proof}

\begin{rem}\label{rem:GqIn}\mbox{~}
	\begin{enumerate}
		\item In the case where $\M=\M_{g, [m]}$ and $I_{\M,w}=G$ is cyclic, one shows that $K$ above can be taken as the field of moduli-definition of the generic point of an irreducible component of the special loci $\M_{g, [m]}(G)$ -- see \cite{ColMau} Lemme 5.2 and Corollaire 4.2.
		\item \label{rem:CompActionInertia}
		Consider a given tangential morphism $f\colon \mathfrak N \to \mathfrak M$ as in Proposition \ref{prop:compMorph} -- i.e. a point $y\in \mathfrak N(\Spec \bar K)$ with image $x \in \mathfrak M(\Spec \bar K)$ with all the compatible datas. The compatible $\Gal(\bar k/k)$-representations in $\pi_1^{f^* \D}(\mathfrak N; t_{f^*\D}, y)$ and $\pi_1^{\D}(\mathfrak M; t_{\D}, x)$ then induce compatible $\Gal(\bar k/k)$-actions $\rho_{\vec s,\bar x}^I$ and $\rho_{\vec s,\bar y}^I$ on the respective inertia groups $I_x$ and $I_{y}$ by commutativity of Diagram \eqref{diag:InertialAction}.
	\end{enumerate}
\end{rem}

\bigskip

Proposition \ref{rem:action_intertielle} and the compatibility through Knudsen morphism are applied in various situations to the moduli spaces of curves in \S \ref{sec:GalActInert}.

\subsection{Tangential Galois Action and Clutching Morphisms}\label{subsec:TangCluCurve}
The tools built in the previous section are now applied to describe more explicitly the tangential $\Gq$-action in the case of the Deligne-Mumford $\QQ$-stack of moduli space of stable curves $\bar\M_{g,[m]}$ and their link through of Knudsen morphisms. Since the case of the stack inertia groups requires an additional specific result, it is dealt with in \S \ref{subsub:DefGqInAct} and \S\ref{subsub:totRam}.

\Mysubsubsection \label{subsub:GqActTBT}
The first step is the choice of a tangential base point in $\mathcal M_{g,[m]}$. Let $x\in \bar\M_{g,[m]}(\Spec\QQ)$ be a maximally degenerated $\QQ$-curve defined as a graph of $\PP^1$ such that marked points and singular points are rational, so that $x$ has only rational automorphisms. Then by Proposition \ref{ex:Nisne},  $x\in \bar\M_{g,[m]}(\QQ)$ admits a Nisnevich neighbourhood. 

Examples of such curves are given by \cite[Fig. $(ii)_n$, $(iii)'_{k,n}$]{IHNAK97} and are reproduced in Fig. \ref{Fig:maxDegG1G2} below for $g\geqslant 1$. Let us denote them by $X_A$ and $X_B$.
\begin{figure}[h]
	\centering
	\subfloat[Curve $X_A$ - type $g=1,n\geqslant 1$]{
		\includegraphics[width=.5\textwidth]{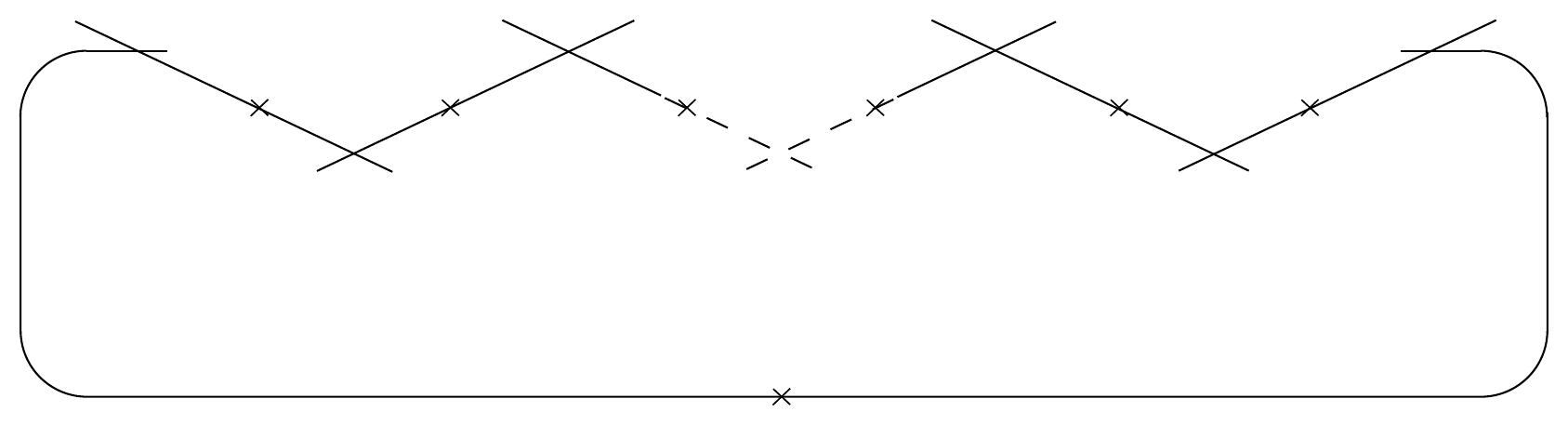}
	}
	\subfloat[Curve $X_B$ - type $g\geqslant 2,n\geqslant 1$]{
		\includegraphics[width=.5\textwidth]{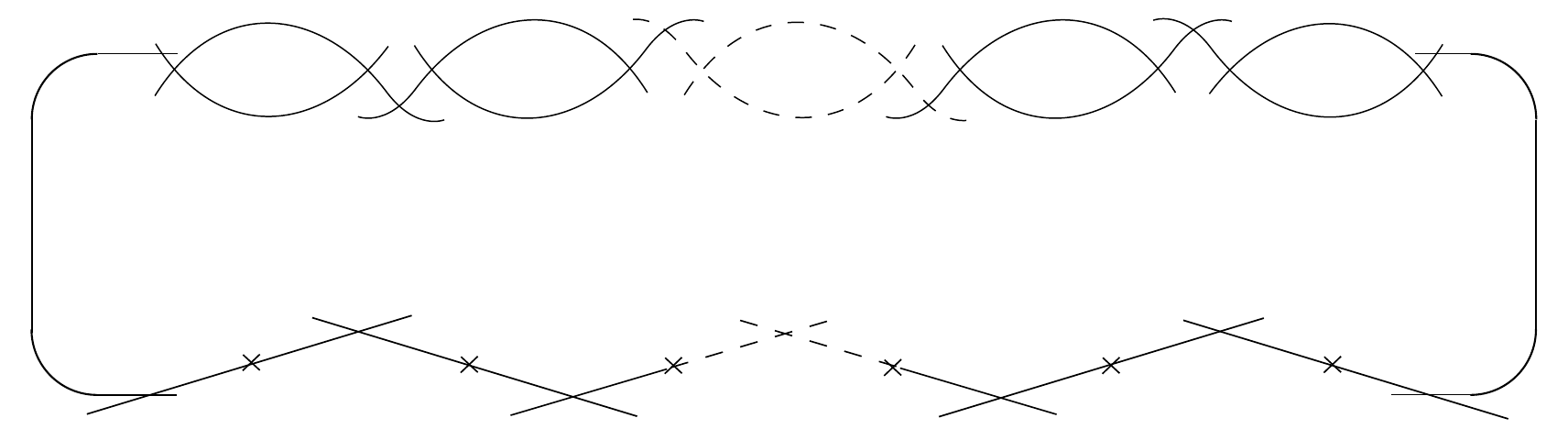}
	}
	\caption{Maximally degenerated curves of type $(g,n)$}
	\label{Fig:maxDegG1G2}
\end{figure}

\begin{rem}\label{rem:tgbptOrig}
  The present construction in \S \ref{subsub:qTBP} is complementary to the original approach of \cite{IHNAK97} by \emph{tangential base point on $\mathcal  M_{g,m}$} where a maximally degenerated curve $X$ by a $\PP^1_{0,1,\infty}$-diagram is defined and a canonical choice of a set of coordinates $\bf q$ of the universal deformation ring $\O^{def}_{X}$ of $X$ is fixed. This corresponds exactly to the choice of a system of parameters which led to Definition \ref{defi:tbp}.
\end{rem}

\Mysubsubsection \label{subsub:DefDivInf}
Let us consider the Knudsen's clutching morphism between moduli spaces of stable curves as defined in \cite{KNUII}:
\[
	\bar\M_{g-1,m+2}\longrightarrow \bar\M_{g,m}.
\]

Considering the action of the permutation group $\mathfrak S_m$ on the first $m$ marked points, the quotient of the clutching morphism by this action defines a morphism
\[
	\beta\colon\bar\M_{g-1,[m]+2}\longrightarrow \bar\M_{g,[m]}.
\]

 Let $\mathcal E= \bar\M_{g, [m]} \setminus \M_{g, [m]}$, and let $\mathcal D$ be the closure of $\mathcal E \setminus \Im(\beta)$ in $\bar\M_{g, [m]}$, which is the union of the irreducible components of $\mathcal E$ disjoint with $\Im(\beta)$. Let $\bar\M_{g-1,[m]+2}$ denote the moduli space of stable curves with $m$ marked points and $2$ fixed points. Then $\mathcal D$ is a normal crossing divisor in $\bar\M_{g, [m]}$ and $\beta^*(\mathcal D)$ is a normal crossing divisor in $\bar\M_{g-1, [m]+2}$ that is equal to $\bar\M_{g-1, [m]+2} \setminus \M_{g-1, [m]+2}$. The partial compactification is then defined as $\tilde\M_{g, [m]}=\bar\M_{g, [m]}\setminus \mathcal D$, and we denote by $\tilde\beta$ the induced Knudsen morphism
\[
	\tilde\beta\colon\bar\M_{g-1, [m]+2}\longrightarrow\tilde\M_{g, [m]}.
\] 

The partial normalisation $X'_A$ (resp. $X'_B$) of $X_A$ (resp. $X_B$) at a singular point $P$, pointed at the pre-images of $P$, is naturally a curve in $\bar\M_{g-1, [m]+2}$ that is sent to $X_A$ (resp. $X_B$) by $\tilde\beta$.


\bigskip

In the following, the fundamental group is based at a point $x\in\{X_A, X_B \}$ in the {partial compactification $\tilde{\M}_{g,[m]}$}. Recall that the choice of a tangential base point $\vec{s}=(x, t_{\mathcal D})$, $x\in \{X_A,X_B\}$, defines the fundamental group $\pi_1^{\mathcal D}(\bar\M_{g, [m]};\vec{s})$, denoted by $\pi_1(\tilde\M_{g, [m]};\vec{s})$ which is isomorphic to $\pi_1(\tilde\M_{g, [m]};x)$ for any geometric point $x \in \tilde\M_{g, [m]}$ by Theorem \ref{theo:RatTBPisFibFunc}. For a $\beta$-compatible tangential base point $\vec s\, '$ we denote in the same way the fundamental group $\pi_1^{\mathcal \beta*(\mathcal D)}(\bar\M_{g-1, [m]+2};\vec{s}\ ')$ by $\pi_1(\M_{g-1, [m]+2};\vec{s}\ ')$.

\Mysubsubsection \label{subsub:GqActKnuMor}
Recall that by Artin-Mazur étale homotopy type theory applied to the $\QQ$-stack $\mathcal M_{g,[m]}$ as in \cite{ODA}, the étale fundamental group associated to a geometric point $\bar x\colon \Spec(\bar{\QQ})\to\mathcal M_{g,[m]}$ yields an Arithmetic-Geometric (short) Exact Sequence:
\begin{equation*}
	1\to \pi_1^{et}(\mathcal M_{g,[m]}\otimes\bar{\QQ},\bar x)\to \pi_1^{et}(\mathcal M_{g,[m]},\bar x)\to \Gq\to 1.
\end{equation*}
For some given normal crossing divisor $\mathcal D$ and $\QQ$-tangential base point $\vec s=(x,t_{\mathcal D})$ on $\bar\M_{g,[m]}$, Proposition \ref{prop:GalUnivHen} 
yields a tangential $\Gq$-representation:
\begin{align}
&\rho_{\vec s}\colon\Gq\to \Aut[\pi_1(\tilde\M_{g,[m]}\otimes \bar \QQ;\vec s)]\label{eq:phi}
\end{align}
where $\pi_1(\tilde\M_{g,[m]}\otimes \bar \QQ;\vec s)$ denotes $\pi_1^{\mathcal D\otimes \bar \QQ}(\bar\M_{g,[m]}\otimes \bar \QQ;\vec s)$.

\bigskip

With the notation of \S \ref{subsub:DefDivInf}, since $\beta$ is \emph{unramified} by \cite{KNUII} Corollary 3.9, we construct some $\beta$-compatible tangential $\Gq$-representations:

\begin{prop}\label{cor:ClutGqComp} There exists a choice of $\QQ$-tangential base points $\vec s$ and $\vec s\,'$ of type $X_A$ or $X_B$, respectively on $\bar{\mathcal M}_{g,[m]}$ and on $\bar{\mathcal M}_{g-1,[m]+2}$ which induces a morphism:
	\[
	\pi_1(\mathcal M_{g-1,[m]+2}\otimes \bar \QQ;\vec s\,')\longrightarrow \pi_1(\tilde\M_{g,[m]}\otimes \bar \QQ;\vec s)
	\]
	and is $\Gq$-equivariant  with respect to $\rho_{\vec s}$ and $\rho_{\vec s'}$.
\end{prop}

\begin{proof}
	Let us first consider the case of ordered marked points $\bar\M_{g,m}$. As the morphism $\bar\M_{g-1,m+2}\to \bar\M_{g,m}$ is unramified by \cite{KNUII} Corollary 3.9, Proposition \ref{prop:compMorph} insures the existence of $\QQ$-tangential base points based at curves of types $X_A$ and $X_B$ and compatible with $\tilde\beta$. Since a tangential representation is defined by the $\Gq$-action on the parameters $t_D$ and on the $\bar{\QQ}$-coefficients of $\vec s$, the result follows directly from ibid. applied to the arithmetic fundamental groups $\pi_1(\mathcal M_{g-1,m+2};\vec s\,')$ and $\pi_1(\mathcal M_{g,m};\vec s)$
	
	For the unordered case, we consider the cartesian diagram
		\[\xymatrix{
			\bar\M_{g-1,m+2}\ar[r]\ar[d]_{\pi_1}&\bar\M_{g,m}\ar[d]^{\pi_2}\\
			\bar\M_{g-1,[m]+2}\ar[r]&\bar\M_{g,[m]}
		}
		\]
		where vertical morphisms are étale surjective since the marked points are supposed distinct. By descent property, to be unramified is local at the source for the étale topology, thus the bottom morphism is unramified and the result follows from the unordered case and the same arguments.
\end{proof}

We insist on the fact that Knudsen morphisms do not lead to \emph{canonical} $\Gq$-actions -- see Remark \ref{Rem:GalNonFunc}. The comparison of $\Gq$-action by change of parameters illustrates the non $\Gq$-invariance of \emph{analytic continuation}, and is indeed \emph{the core of the Arithmetic Geometry of moduli spaces of curves} as illustrated by the role of Deligne's \emph{droit chemin $p$ from $\vec{01}$ to $\vec{10}$} in $\M_{0,4}$ as in \cite{IHARA91}.

\begin{rem} As a special case and as another general application of Proposition \ref{prop:compMorph}, we signal the following:
	\begin{enumerate}
		\item The above construction of $\beta$-compatible  $\Gq$-representations is the algebraic generalisation of the topological approach of \cite{Colg1} where a \emph{mapping class groups} morphism $\Gamma_{0,[m]}^2\to\Gamma_{1,[m]}$ is defined to deal with the étale type inertia in genus $1$;
		\item \label{Rem:KnudGen} The Knudsen's clutching morphisms 
		\begin{align*}
		&\beta_{g_1,g_2}\colon\bar\M_{g_1,m_1}\times \bar\M_{g_2,m_2}\longrightarrow \bar\M_{g,m}.
		\end{align*}
		being closed immersions, the approach above readily applies to the study of various $\beta_{g_1,g_2}$-compatible $\Gq$-representations.
	\end{enumerate}
\end{rem}

\section{Galois Action on Inertia}\label{sec:GalActInert}
This section details the the main result of this paper: the description of the $\Gq$-action on the cyclic stack inertia of $\M_{g,[m]}$ defined by a tangential $\Gq$-representation. First, the approach using
irreducible components of \emph{special loci} initiated in \cite{ColMau} is recalled, and is shown to provide a favourable context for applying \S \ref{sec:GaloisInfinity}. The behaviour of the inertia Galois action under specialisation is then established and used to prove the main theorem as a result of the previous sections.

\subsection{Special Loci and Inertia Groups} 
Let $w$ be a geometric point of $\M_{g,[m]}$ and $G < I_w$ its stack inertia group. We consider the \emph{special loci} $\M_{g,[m]}(G)$ associated to $G$, i.e. the loci of curves of $\M_{g,[m]}$ that admit a $G$-action -- see \cite{ColMau} \S 2.1. By the residual finiteness property of the orbifold fundamental group of $\M_{g, [m]}(\CC)^{an}$, the morphism of \S \ref{subsub:defGalAcIn} turns into an embedding $\omega_{w} \colon I_{w} \hookrightarrow \pi_1(\M_{g,[m]}\otimes\bar \QQ, w)$. 

Let $\rho_s\colon\Gal(\bar \QQ/\QQ)\to \Aut[\pi_1(\tilde\M_{g,[m]}\otimes \bar \QQ; \vec{s})]$ be a Galois tangential representation defined by a $\QQ$-tangential base point $\vec s$ with support in the partial compactification $\tilde\M_{g,[m]}$ as in \S \ref{subsub:GqActKnuMor}. In case $w$ is stable under $\Gal(\bar \QQ/\QQ)$, this defines  a stack inertia $\Gq$-action $\rho_{\vec s}^I$ on $I_w$ (cf. Eq.~\eqref{eq:InGalSt}), that we further describe when $G=\langle\gamma\rangle$ is cyclic.

\Mysubsubsection \label{subsub:DefGqInAct}
The study of the action $\rho_{\vec s}^I$ is linked to the action of $\Gal(\bar \QQ/\QQ)$ on the set of irreducible components of $\M_{g,[m]}(G)\otimes\bar\QQ$ cf. \cite{ColMau}, \S 2.2, so it is fundamental to have first a good description of it. As the normalisation of $\M_{g,[m]}(G)$ is $\M_{g,[m]}[G]/\Aut(G) $, it is possible to replace one by the other, and then prove that the irreducible components of $\M_{g,[m]}(G)$ are geometrically irreducible, see \cite{ColMau} Corollary 3.12 and Theorem 4.3: Denoting $\bf \underline{kr}$ the \emph{algebraic branch data} of \S \ref{subsub:GDefGenTypeMarked}, it is shown that such irreducible component is of the form $\mathcal M_{g,[m],\bf \underline{kr}}(G)$, composed of curves with given $\bf \underline{kr}$ data, see ibid. This result can be reformulated in terms of points of $\mathcal M_{g,[m]}(G)$ with value in precise fields, as in the following lemma.

\begin{lem}[\cite{ColMau} Lemma 5.2]\label{lem:morphlindisjoint}
	For any irreducible component $\mathcal Z \subset \mathcal M_{g,[m]}(G)$ there exists a morphism $\Spec (K) \to \mathcal Z$ with 
	$K$ linearly disjoint from $\QQ$.
\end{lem}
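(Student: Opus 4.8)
The plan is to leverage the geometric irreducibility and rationality result just cited (Theorem 4.3 of \cite{ColMau}) for the irreducible components of $\M_{g,[m]}(G)$. First I would recall what that result provides: each irreducible component $Z \subset \M_{g,[m]}(G)$, being described by the algebraic branching data $\bf \underline{kr}$, is defined over $\QQ$ and is \emph{geometrically irreducible}. The essential content of the lemma is then to extract from this geometric irreducibility a morphism $\Spec K \to Z$ whose function field $K$ is linearly disjoint from $\bar\QQ$ over $\QQ$, which is precisely the hypothesis required to invoke Proposition \ref{rem:action_intertielle} later on.

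\emph{The approach.} Since $Z$ is a reduced scheme (or stack) of finite type defined over $\QQ$ and geometrically irreducible, its generic point $\eta$ has residue field $K = \QQ(Z)$, the function field of $Z$. The key algebraic fact is that for a variety defined over $\QQ$, geometric irreducibility is \emph{equivalent} to the condition that $\QQ$ is algebraically closed in $K$; equivalently, $Z \times_{\QQ} \bar\QQ$ remains irreducible exactly when no nontrivial algebraic extension of $\QQ$ embeds into $K$. This last condition is in turn equivalent to the linear disjointness of $K$ and $\bar\QQ$ over $\QQ$. Thus I would take the morphism $\Spec K \to Z$ to be the canonical inclusion of the generic point, and the linear disjointness of $K$ from $\QQ$ (meaning $K \otimes_{\QQ} \bar\QQ$ is a field, equivalently $\bar\QQ \cap K = \QQ$ inside an algebraic closure of $K$) follows directly from geometric irreducibility.

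\emph{The steps in order.} (1) Invoke Theorem 4.3 of \cite{ColMau} to obtain that $Z$ is defined over $\QQ$ and geometrically irreducible. (2) Let $\eta$ be the generic point of $Z$ and $K = \kappa(\eta)$ its residue field, giving the dominant morphism $\Spec K \to Z$. (3) Recall the standard equivalence from algebraic geometry: for an integral $\QQ$-scheme of finite type, $Z \times_\QQ \bar\QQ$ is irreducible if and only if $\QQ$ is separably (hence, in characteristic $0$, algebraically) closed in $K$. (4) Translate ``$\QQ$ algebraically closed in $K$'' into the statement that $K$ and $\bar\QQ$ are linearly disjoint over $\QQ$. This is immediate since in characteristic zero the extensions are separable, and a separable algebraic subextension of $K/\QQ$ that also sits inside $\bar\QQ$ must be trivial precisely when $\QQ$ is algebraically closed in $K$.

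\emph{The main obstacle.} The genuine subtlety is not the field-theoretic translation, which is routine, but making sure the argument is valid for the \emph{stack} $\M_{g,[m]}(G)$ rather than merely a scheme; the irreducible component $Z$ may carry stacky structure. I expect the cleanest route is to work with the coarse moduli space or the normalisation $\M_{g,[m]}[G]/\Aut(G)$ already introduced in the paper, for which Theorem 4.3 supplies geometric irreducibility over $\QQ$ directly, and then to observe that the residue field of the generic point is insensitive to the gerbe structure (the automorphism group $G$ contributes only a finite group scheme which, being $\QQ$-rational here, does not enlarge the relevant algebraic closure of $\QQ$ inside $K$). Once the problem is reduced to the coarse space this way, the remaining verification is the standard equivalence quoted above, so the lemma follows.
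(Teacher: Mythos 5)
Your field-theoretic reduction is fine: since $\bar\QQ/\QQ$ is Galois and we are in characteristic $0$, linear disjointness of $K$ from $\bar\QQ$ over $\QQ$ is equivalent to $\QQ$ being algebraically closed in $K$, which for an integral $\QQ$-variety of finite type is equivalent to geometric irreducibility. The gap is in what you set aside as the ``main obstacle'': for this lemma, the stacky issue \emph{is} the entire content, and your way of dismissing it does not work. A morphism $\Spec K \to Z$ is a point of the \emph{stack} $Z$, that is, an actual curve over $K$ whose classifying map factors through $Z$; it is not a point of the coarse space, and the later application (Proposition \ref{rem:action_intertielle}, conjugation of $\Gal(\bar K/K)$ on $I_{\bar z}$) genuinely needs the curve over $K$. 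Now every curve in $Z\subset \M_{g,[m]}(G)$ has automorphism group containing $G\neq 1$, so the residue gerbe of $Z$ at the generic point of its coarse space is banded by a nontrivial finite group, and such a gerbe over a field need not be neutral: this is exactly the classical obstruction ``field of moduli $\neq$ field of definition'' (Earle, Shimura). Hence the generic point of the coarse space need not lift to any morphism $\Spec K_0 \to Z$ at all. Your parenthetical claim that the constant band ``does not enlarge the relevant algebraic closure of $\QQ$ inside $K$'' addresses the wrong question: the problem is the \emph{existence} of a lift, not the constants of $K_0$; and if one forces a lift by passing to a finite extension of $K_0$ splitting the gerbe, that extension may introduce new constants and destroy linear disjointness.

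The paper's actual route (Lemma 5.2 of \cite{ColMau}, and the Remark following the lemma in \S \ref{subsub:DefGqInAct}) is therefore different: the $K$-point is built \emph{by factorisation through a base change of $\M_{g,[m]}$ which kills the automorphisms of the gerbe at the generic point} -- for instance one may mark sufficiently many further points, so that the pullback of $Z$ becomes generically representable -- chosen so as to preserve geometric irreducibility over $\QQ$ (the fibres of such a base change over $Z$ are configuration spaces of the curves themselves, hence geometrically irreducible, so the pullback of $Z$ is again geometrically irreducible and defined over $\QQ$). The generic point of this pullback is then an honest scheme-theoretic point; its residue field $K$ is linearly disjoint from $\bar\QQ$ by precisely your steps (3)--(4), and composing with the projection yields $\Spec K \to Z$. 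As the paper states explicitly, taking the generic point of $Z$ itself is legitimate only when $Z$ has a dense open substack with trivial automorphisms, which never happens when $G\neq 1$. Your proposal is missing this construction, and with it the proof.
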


Let $G < I$ be the generic stack inertia group of an irreducible component $\mathcal Z$ of the special loci $\mathcal M_{g,[m]}(G)$. Lemma \ref{lem:morphlindisjoint} provides a $K$-point $z$ of the component whose geometric inertia $I_{\bar z}$ contains $I$, and it follows from Proposition \ref{rem:action_intertielle} that there exists a stack inertia Galois action
\begin{equation}\label{eq:GKI}
		\rho_{\vec s,\bar z}^I\colon\Gq\longrightarrow \Aut[I]
\end{equation}
which is defined by conjugacy and is induced by the local $\Gal(\bar K/K)$-action $\rho_{\bar z}^I$ on $I_{\bar z}$. 

\medskip

The definition of $\rho_{\vec s,\bar z}^I$ relies indeed on many choices, such as fixing an algebraic closure of $K$ or choosing a specialization morphism from $\bar z$ to the boundary of $\bar{\M}_{g,[m]}$. This results in the identification of $\rho^I_{\bar z} $ to the stack inertia $\Gq$-action $\rho^I_{\vec s}$ of Eq. \ref{eq:InGalSt} which is induced by the given tangential $\Gq$-action $\rho_{\vec s}$ -- see the discussion above Proposition \ref{rem:action_intertielle}.

\begin{rem}
 The $K$-point of an irreducible component given by Lemma \ref{lem:morphlindisjoint} is built by factorisation through a certain base change of $\M_{g,[m]}$ in order to kill the automorphisms of the gerbe at the generic point. In particular, when the irreducible component admits a dense open subset with trivial automorphism group, $\Spec (K)\to \mathcal Z$ can be chosen to be the generic point of the component.
\end{rem}

\Mysubsubsection\label{subsub:totRam} For a general curve $C\in \M_{g,[m]}(G)$, recall that the associated $G$-cover $C\to C/G$ factorises as below with the properties:
\vspace*{-1em}

\noindent\begin{minipage}{.78\textwidth}
	\begin{enumerate}
		\item the group $H < G$ is generated by the stabilisers of ramification points of the $G$-cover $C\to C/G$;
		\item the cover $C/H \to C/G$ is étale.
	\end{enumerate}
\end{minipage}
\begin{minipage}{.3\textwidth}
	\[
	\xymatrix@R=1em@C=1.5pt{
		C\ar[dd]\ar[rd]& \\
		& C/H\ar[ld]\\
		C/G& 
	}
	\]
\end{minipage}
When $H=G$, the action of $G$ on $C$ is said to be \emph{without étale factorisation}. In this case, the stack inertia $\Gq$-action of Eq.~\eqref{eq:GKI} is given by the Proposition below, which also plays a key role in the final proof of the general case.

\begin{prop}\label{prop:GalActTotRam}
	Let $\eta\colon \Spec K \to  \mathcal M_{g,[m]}(G)$ be a morphism with value into a field $K$ linearly disjoint from $\QQ$. If the curve $\eta$ is without étale factorisation, then the $\Gq$-action $\rho_{\vec s}^I$ on $G=I_{\bar \eta}$ is given by cyclotomy i.e. for $\sigma \in \Gq$ and $\gamma \in G$ we have $\sigma.\gamma  = \gamma^{\chi(\sigma)}$.
\end{prop}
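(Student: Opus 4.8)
The plan is to reduce the statement to the local monodromy at the ramification points, exploiting that the hypotheses place us exactly in the situation of Proposition \ref{rem:action_intertielle}. First, since $K$ is linearly disjoint from $\QQ$, that proposition identifies the tangential $\Gq$-action on the inertia $I_{\bar z}\cong G$ with the $\Gal(\bar K/K)$-action by conjugacy, i.e.\ with Galois twisting ${}^{\sigma}(-)$ of automorphisms of the geometric curve $C_{\bar z}$; here the geometric point is stable, so no hidden-path conjugation intervenes and one genuinely has an equality rather than the $\chi$-conjugacy of Theorem (A). As Galois twisting respects composition, this action is by group automorphisms of the cyclic group $G=\langle\gamma\rangle$, so there is a unique $a(\sigma)\in(\ZZ/n\ZZ)^{*}$ with $\sigma.\gamma=\gamma^{a(\sigma)}$, and everything reduces to proving $a(\sigma)\equiv\chi(\sigma)\pmod{n}$.

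Next I would compute $a(\sigma)$ locally. Write $\{P_i\}_{i\in I}$ for the ramification points of $C\to C/G$, with stabiliser $G_{P_i}=\langle\gamma^{m_i}\rangle$ of order $d_i$, so that $m_i=n/d_i$. By Lemma \ref{lem:unifZeta} and Remark \ref{rem:ActionTangentSpace}, the generator of $G_{P_i}$ acts faithfully on the tangent line $T_{P_i}C$ through the $\gamma$-type, a primitive $d_i$-th root of unity $\xi_i$. Reading the inertia of $G_{P_i}$ through the local Kummer presentation $y^n=f(x)$ of $C\to C/G$, its monodromy is governed by the $\mu_n$-torsor of $n$-th roots of the local parameter, on which $\Gal(\bar K/K)$ acts through the cyclotomic character; hence $\sigma$ twists $\xi_i$ into $\xi_i^{\chi(\sigma)}$. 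Comparing this with $\sigma.(\gamma^{m_i})=(\gamma^{a(\sigma)})^{m_i}$ and using that the tangent-space character is faithful on $G_{P_i}$ forces $m_i\,a(\sigma)\equiv m_i\,\chi(\sigma)\pmod{n}$, that is $a(\sigma)\equiv\chi(\sigma)\pmod{d_i}$.

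Finally I would globalise. The hypothesis that $\eta$ is \emph{without étale factorisation} means that the stabilisers $G_{P_i}$ generate $G$, which for a cyclic group is equivalent to $\mathrm{lcm}_{i}(d_i)=n$: indeed $\langle\gamma^{m_i}\mid i\in I\rangle=\langle\gamma^{\gcd(m_1,\dots,m_r,n)}\rangle$, and $\gcd(m_1,\dots,m_r,n)=1$ says precisely that for each prime $p\mid n$ some $d_i$ carries the full $p$-part of $n$. Since $a(\sigma)-\chi(\sigma)$ is then divisible by every $d_i$, it is divisible by $\mathrm{lcm}_i(d_i)=n$, whence $\sigma.\gamma=\gamma^{\chi(\sigma)}$, with no conjugating factor because the action is already a group automorphism of the abelian group $G$.

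The delicate step is the middle one: rigorously relating the abstract Galois action on the stack-inertia element — defined via hidden paths and the chosen tangential base point — to the concrete cyclotomic twist of the tangent-space character at $P_i$. This requires a careful local description of the inertia of $G_{P_i}$ through the $\mu_n$-torsor of $n$-th roots of the local parameter, and attention to the fact that $\Gal(\bar K/K)$ may permute the $P_i$ while twisting their $\gamma$-types by $\chi$; one must check that the identification furnished by Proposition \ref{rem:action_intertielle} is compatible with this local picture. This is the analogue, in our tangential-base-point formalism, of the étale-cohomology computation underlying Theorem 5.4 of \cite{ColMau}.
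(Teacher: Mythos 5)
Your overall route is the paper's own: identify the tangential Galois action with the twisting action of $\Gal(\bar K/K)$ on automorphisms via Proposition \ref{rem:action_intertielle}, run the branch cycle argument at the ramification points, and use the non-\'etale-factorisation hypothesis to globalise (your translation of ``the stabilisers generate $G$'' into $\mathrm{lcm}_i(d_i)=n$, and the resulting lcm argument, are correct). The gap sits in your middle step, exactly at what you yourself call the delicate point, and it is not the routine compatibility check you make it out to be. The branch cycle argument compares the tangent character of ${}^{\sigma}(\gamma^{m_i})$ \emph{at the point $\sigma(P_i)$} with that of $\gamma^{a(\sigma)m_i}$ at that same point; writing $a_j$ for the exponent of the $\gamma$-type at $P_j$ (Definition \ref{defi:GType}, Lemma \ref{lem:unifZeta}), what actually comes out is $a_{\sigma(i)}\,a(\sigma)\equiv a_i\,\chi(\sigma) \pmod{d_i}$. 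Your congruence $a(\sigma)\equiv\chi(\sigma)\pmod{d_i}$ follows only when $\sigma$ preserves $\gamma$-types pointwise. If the multiset of branching data is stable under a nontrivial unit $u\in(\ZZ/n\ZZ)^{*}$ -- e.g.\ ${\bf k}=(1,1,-1,-1)$, stable under $u=-1$ -- the system of local congruences is equally consistent with $a(\sigma)=u\,\chi(\sigma)$, so the local data alone cannot pin down $a(\sigma)$.

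This cannot be repaired by a finer local analysis, because the statement read literally, for an \emph{arbitrary} $\eta$, can fail: take $y^{n}=(x^{2}-a_1^{2})/(x^{2}-b_1^{2})$ over $K=\QQ(b_1,t)$ with $a_1=t^{n}b_1$, which is without \'etale factorisation and carries the $K$-rational involution $\tau(x,y)=(a_1b_1/x,\;t^{2}/y)$ satisfying $\tau\gamma\tau^{-1}=\gamma^{-1}$; twisting this curve by the cocycle $\sigma\mapsto\tau^{v(\sigma)}$ attached to a quadratic character $v$ of $\Gal(\bar K/K)$ produces another $K$-point of $\M_{g,[m]}(G)$, still without \'etale factorisation and with $K$ linearly disjoint from $\QQ$, on which the action is $\gamma\mapsto\gamma^{(-1)^{v(\sigma)}\chi(\sigma)}$ rather than $\gamma\mapsto\gamma^{\chi(\sigma)}$. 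Any correct proof must therefore use what the paper's proof actually invokes: the point is not arbitrary but the one furnished by Lemma \ref{lem:morphlindisjoint} on a component $\M_{g,[m],{\bf \underline{kr}}}(G)$, and the sign ambiguity is killed by the global results of \cite{ColMau} -- constancy of the branching data on the irreducible components together with their $\QQ$-rationality and geometric irreducibility (Theorem 4.3 ibid.), which is the content of the \'etale-cohomology computation of Theorem 5.4 there. Since your proposal never brings in these global inputs, nor the particular choice of $\eta$, it cannot be completed as it stands.
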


The proposition is actually \cite{ColMau} Theorem 5.4 to which we refer for details. The idea of the proof is go as follows: Let $C\colon \Spec K \to \mathcal M_{g,[m],{\bf \underline{kr}}}(G)$ be a morphism as in Lemma \ref{lem:morphlindisjoint}, and suppose that the action of $G$ on the curve $C$ is without étale factorisation. Since the stabilisers of a ramification point are generating subgroups of the stack inertia group, the \emph{branch cycle argument} implies that the $\Gq$-action is given by cyclotomy \emph{on a generator $\gamma$ of the inertia group}. 

\bigskip

The following section establishes a similar result for tangential $\Gq$-action $\rho_{\vec s}$ on the fundamental group \emph{for curves with possible étale factorisation} using the $\Gq$-compatibility of the Knudsen morphism of \S \ref{subsub:GqActKnuMor}.

\subsection{Inertial Limit Galois Action and Cyclotomy}\label{subsec:InLimCyc}
We describe the tangential $\Gq$-action $\rho_{\vec s}$ on cyclic stack inertia, first for curves without étale factorisation, then in the general case. Note that the results of the first section readily extend to any Deligne-Mumford stack.

\Mysubsubsection \label{subsub:specI}
Let $\tilde{\mathcal M}_{g,[m]}$ be the partial compactification of \S \ref{subsub:GqActKnuMor}. We establish the behaviour under specialisation of the stack inertia groups within $\pi^{et}_1(\tilde{\mathcal M}_{g,[m]})$. More precisely, the goal of this section is compare different Galois action on the étale fundamental group  based on different points/tangential points.

Let $\vec s$ be a tangential base point of $\tilde{\mathcal M}_{g,[m]}$, $\eta\in \tilde \M_{g,[m]}$ be a point above the generic point of $\vec s$, and $z \in \tilde{\mathcal M}_{g, [m],\bf \underline{kr}}(G)$ be a specialisation of $\eta$. More precisely, let $R$ be a valuation ring with algebraically closed fraction field $K$ and residue field $k$, endowed with a morphism $T\colon \Spec R \to \mathcal M_{g, [m]}(G)$ which sends the generic point of $\Spec R$ onto the image of the generic point of $\vec s$ -- thus defining two geometric points $\bar \eta$ and $\bar z$. Let also be $\phi_{\bar \eta}$ be an étale path from $\bar \eta$ to $\vec{s}$ as given by change of base point in $\tilde \M_{g,[m]}$. Since étale coverings are proper morphisms, the choice of $T$ defines an étale path $\phi_{\bar \eta \rightsquigarrow \bar z}$ from $\bar\eta$ to $\bar z$, and following Diagram \ref{Diag:InGenTgbp} below:
\begin{equation}\label{Diag:InGenTgbp}
\begin{tikzcd}
I_{\bar z} \arrow{d}{\omega_{\bar z}}  & & I_{\bar \eta}\arrow{d}{\omega_{\bar\eta}} \\
\pi_1(\tilde \M_{g,[m]}, {\bar z}) \arrow{rr}{\phi_{\bar \eta \rightsquigarrow \bar z}} \arrow{rd}{\phi_{\bar z}} &   & \pi_1(\tilde \M_{g,[m]}, {\bar \eta}) \arrow{dl}{\phi_{\bar \eta}} \\
&  \pi_1(\tilde \M_{g,[m]}; \vec{s})
\end{tikzcd}
\end{equation}
with $\phi_{\bar z} = \phi_{\bar \eta}\circ \phi_{\bar \eta \rightsquigarrow \bar z}$.

\medskip

The following lemma is an analog of Grothendieck's specialisation Theorem for the fundamental group in the case of stack inertia groups.

\begin{lem}\label{lem:InSpecGen}
	Let $\eta$ and $z$ be two points in $\tilde\M_{g,[m]}$ such that $z$ is a specialisation of $\eta$. Let $R$ be a valuation ring and $T\in\tilde\M_{g,[m]}(\Spec R)$ whose generic fibre is a geometric point $\bar\eta$ above $\eta$ and whose special fibre is a geometric point $\bar z$ above $z$. Then the choice of $T$ induces an étale path $\bar\eta \rightsquigarrow\bar z$ which sends the stack inertia $I_{\bar \eta}$ into $I_{\bar z}$. 
\end{lem}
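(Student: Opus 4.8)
The plan is to build the étale path directly from the strict henselianity of $R$, and then to obtain the inertia assertion from the finiteness of the inertia stack together with the functoriality of the hidden-path morphism $\omega$. Throughout I write $C_{\bar\eta}$ and $C_{\bar z}$ for the generic and special fibres of $T$, so that $I_{\bar\eta}=\Aut(C_{\bar\eta})$ and $I_{\bar z}=\Aut(C_{\bar z})$.

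First I would observe that, since $\bar\eta$ and $\bar z$ are geometric points, both $K=\Frac(R)$ and the residue field of $R$ are separably closed; as $\mathrm{char}=0$ they are algebraically closed, so $R$ is a strictly henselian valuation ring. Consequently, for every cover $Y\to\tilde\M_{g,[m]}\times\bar\QQ$ in the relevant category, the pullback $T^*Y\to\Spec R$ is finite étale over a strictly henselian local base, hence completely decomposed: $T^*Y\cong\coprod_i\Spec R$. Passing to connected components identifies $F_{\bar\eta}(Y)\cong\pi_0(T^*Y)\cong F_{\bar z}(Y)$, naturally in $Y$, and therefore yields an isomorphism of fibre functors $F_{\bar\eta}\cong F_{\bar z}$. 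This is precisely the étale path $\bar\eta\rightsquigarrow\bar z$, with its transport isomorphism $p\colon\pi_1(\tilde\M_{g,[m]}\times\bar\QQ,\bar\eta)\xrightarrow{\sim}\pi_1(\tilde\M_{g,[m]}\times\bar\QQ,\bar z)$.

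Next, for the inertia, I would use that $\tilde\M_{g,[m]}$ is a Deligne-Mumford stack with \emph{finite inertia}, being an open substack of the proper stack $\bar\M_{g,[m]}$ whose objects have finite automorphism groups. Hence the automorphism group scheme $\underline{\Aut}_R(T)=\Spec R\times_{\M}I_{\mathcal M}\to\Spec R$, obtained by pulling back the finite morphism $I_{\mathcal M}\to\M$ along $T$, is finite and in particular proper. An element $\alpha\in I_{\bar\eta}=\Aut(C_{\bar\eta})$ is exactly a $K$-point of $\underline{\Aut}_R(T)$ over the identity of $\Spec R$; by the valuative criterion of properness it extends uniquely to an $R$-point $\tilde\alpha\in\underline{\Aut}_R(T)(R)$, and restriction to the special fibre gives $\tilde\alpha|_{\bar z}\in I_{\bar z}$. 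This produces a map $I_{\bar\eta}\to I_{\bar z}$, and it remains to match it with the path. For each cover $Y$ the $2$-automorphism $\tilde\alpha$ of $T$ acts on $T^*Y$ over $\Spec R$, permuting the locally constant set $\pi_0(T^*Y)$; reading this permutation on the generic fibre recovers the action of $\omega_{\bar\eta}(\alpha)$ on $F_{\bar\eta}(Y)$, and on the special fibre that of $\omega_{\bar z}(\tilde\alpha|_{\bar z})$ on $F_{\bar z}(Y)$. As both are the same permutation of $\pi_0(T^*Y)$, naturality in $Y$ gives $\omega_{\bar z}(\tilde\alpha|_{\bar z})=p\,\omega_{\bar\eta}(\alpha)\,p^{-1}$, so that $p$ sends $\omega_{\bar\eta}(I_{\bar\eta})$ into $\omega_{\bar z}(I_{\bar z})$, which is the claim.

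The main obstacle I expect is this last compatibility step: one must make precise the functoriality of the hidden-path construction $\omega$ of \S\ref{subsub:defGalAcIn} with respect to the trivialisation of $T^*Y$ over the strictly henselian base, and in particular verify that the action of $\tilde\alpha$ on the locally constant sheaf $\pi_0(T^*Y)$ is genuinely independent of the fibre at which it is read. By contrast, the existence and uniqueness of the extension $\tilde\alpha$ is comparatively formal once finite inertia—hence properness of $\underline{\Aut}_R(T)\to\Spec R$—has been invoked.
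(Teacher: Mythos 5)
Your proposal is correct, and it follows the same skeleton as the paper's proof -- use $T$ to transport both \'etale paths and automorphisms from the generic to the special fibre -- but with genuinely different technical inputs at both steps. For the path, the paper applies the valuative criterion of properness to the \'etale covers directly (the classical specialisation map), whereas you observe that $R$ is strictly henselian and trivialise the covers; these are interchangeable, and your trivialisation has the advantage of making the later compatibility transparent. The real divergence is the inertia step: the paper extends an automorphism of $C_{\bar\eta}$ to the family $T$ via the stable reduction process (uniqueness of stable models), a curve-specific argument, while you obtain the extension from finiteness of the inertia stack $I_{\mathcal M}\to\mathcal M$ together with the valuative criterion applied to $\underline{\Aut}_R(T)\to\Spec R$. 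Your version is more intrinsic and applies verbatim to any separated Deligne--Mumford stack, which matches the paper's own remark that this part of the argument extends to all Deligne--Mumford stacks; the paper's version stays within the geometry of stable curves. You also actually prove the commutativity that the paper merely asserts (``the lemma follows from the commutativity of the diagram''), and the obstacle you flag at the end is not a real one: over the strictly henselian $R$, an $R$-automorphism of $\coprod_i \Spec R$ is nothing but a permutation of the index set, so it induces the same permutation on every fibre. One point you should record, since it is invoked later in the proof of Theorem \ref{theo:GalActStrg} (``the property of injectivity under specialisation''): the map $\alpha\mapsto\tilde\alpha|_{\bar z}$ is injective. The paper derives this from the unramifiedness of the diagonal of $\bar\M_{g,[m]}$; in your setting it drops out either of the conjugation formula $\omega_{\bar z}(\tilde\alpha|_{\bar z})=p\,\omega_{\bar\eta}(\alpha)\,p^{-1}$ together with the injectivity of $\omega_{\bar\eta}$ (residual finiteness, as recalled at the beginning of Section \ref{sec:GalActInert}), or of the fact that distinct $R$-sections of the finite unramified scheme $\underline{\Aut}_R(T)$ over the henselian $R$ lie in distinct connected components and hence have distinct special fibres.
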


\begin{proof} Since étale coverings are proper morphisms, the choice of $T$ defines an étale path $\phi_{\bar \eta \rightsquigarrow \bar z}$ from $\bar\eta$ to $\bar z$ by using the valuative criterion for properness. This choice is by definition compatible with specialisation.
	
	Consider the curves $C_{\bar \eta}$ and $C_{\bar z}$ with their respective automorphism groups $\Aut(C_{\bar \eta})$ and $\Aut(C_{\bar z})$. The stable reduction process induces a morphism $\phi\colon\Aut(C_{\bar \eta})\to \Aut(C_{\bar z})$, where $\phi$ is injective thanks to the non-ramification of the diagonal of $\bar\M_{g,[m]}$. The lemma follows from the commutativity of the diagram
	\[
	\begin{tikzcd}
	\Aut(C_{\bar \eta})=I_{\bar \eta} \arrow[hookrightarrow]{r} \arrow[hookrightarrow]{d}{\phi} & \pi_1({\tilde{\mathcal M}}_{g,[m]},\bar \eta)\arrow{d}{\bar\eta \rightsquigarrow\bar z}\\
	\Aut(C_{\bar z})=I_{\bar z} \arrow[hookrightarrow]{r} & \pi_1(\tilde{\mathcal M}_{g,[m]},\bar z).
	\end{tikzcd}
	\]
\end{proof}

This result should be read in relation with Theorem \ref{theo:GoodGAct} on the generic $G$-deformation of a smooth curve to the boundary of $\tilde{\mathcal M}_{g,[m]}$. Moreover, the subgroups $\phi_{\bar z}(I_{\bar z})$ and $\phi_{\bar\eta}(I_{\bar \eta})$ can be seen seen as subgroups of $\pi_1( \tilde \M_{g,[m]}; \vec{s})$.

\Mysubsubsection\label{subsub:defactcyclo} When the $\Gq$-action $\rho_{\vec s}^I$ on the stack inertia group of the generic point $I_{\bar\eta}$ is given by cyclotomy, one obtains the following description of the tangential $\Gq$-action $\rho_{\vec s}$ on $\pi_1(\tilde{\M}_{g,[m]}\otimes\bar \QQ;\vec s)$ on the stack inertia group $I_{\bar z}$ of the specialisation.

\begin{lem}\label{lem:CyclFromGenToSpec}
	Let $\eta$ be the generic point of an irreducible component of the special loci $\M_{g,[m]}(G)$, and  $z$ a specialisation of $\eta$ in $\tilde{\M}_{g, [m]}$. If there exists a $\Gq$-action $\rho_{\vec s}^I$ on $G< I_{\bar \eta}$ that is given by cyclotomy, then  for $\sigma\in \Gq$ there exists an étale path $\delta_\sigma$ of $\pi_1(\widetilde{\M}_{g,[m]}\otimes \bar \QQ,\vec s)$ such that for any $\gamma \in G< I_{\bar z}$ in  is given by:  
	\begin{equation*}
	\rho_{\vec s}(\sigma).\gamma = \delta_\sigma . \gamma^{\chi(\sigma)} . \delta_\sigma^{-1}
	\end{equation*}
\end{lem}

In the following, we say that such a tangential $\Gal(\bar\QQ/\QQ)$-action on a stack inertia element \emph{is given by $\chi$-conjugacy}.

\begin{proof}
	Let $\gamma$ be generator of $I_{\bar \eta}$ and write  \[
	\tau = \phi_{\bar \eta \rightsquigarrow \bar z}\circ\gamma\circ\phi_{\bar \eta \rightsquigarrow \bar z}^{-1}.,
	\]
        For a $\sigma \in \Gal(\bar \QQ/\QQ)$, the discussion above and the compatibility of $\rho_{\vec s}^I$ and $\rho_{\vec s}$ of Proposition \ref{rem:action_intertielle} give
	\begin{equation*}
	\rho_{\vec s}(\sigma).\gamma = \delta_\sigma . \gamma^{\chi(\sigma)} . \delta_\sigma^{-1}
	\end{equation*}
	where $\delta_\sigma = \sigma(\phi_{\bar \eta \rightsquigarrow \bar z}) \circ \phi_{\bar \eta \rightsquigarrow \bar z}^{-1}$  is an étale path in $\pi_1(\tilde{\M}_{g,[m]}\otimes\bar \QQ;\vec s)$.
\end{proof}

For curves without étale factorisation, the Lemma above and Proposition \ref{prop:GalActTotRam} gives in particular:

\begin{cor}\label{theo:GalActTotRam} Let $\vec s$ be a tangential base point of $\M_{g, [m]}$, denote by $\rho_{\vec s}$ the tangential $\Gq$-representation induced by $\vec s$, and let $G$ be a cyclic stack inertia group of $\M_{g,[m]}$. If $G$ satisfies the non-étale factorization property, then $\rho_{\vec s}$ induces a $\Gal(\bar\QQ/\QQ)$-action on $G$ given by $\chi$-conjugacy.
\end{cor}

\begin{rem}  In $\M_{0,4}\simeq \PP^1\setminus\{0,1,\infty\}$, the \emph{droit chemin} $p$ from the tangential base point $\vec{01}=\Spec \QQ\llbracket t \rrbracket$ to $\vec{10}=\Spec \QQ\llbracket -t \rrbracket$ -- see \cite{IHARA91} -- admits a factorisation by the path $r$ from $\vec{01}$ to $1/2\in \M_{0,4}$. The $\Gq$-action on $p$ gives a factor $f_{\sigma}$ while $r$ gives a factor $g_{\sigma}$ -- see \cite{LS97}. Since the point $1/2\in \M_{0,4}$ represents a point in $\M_{0,[4]}$ with (reduced) cyclic inertia $\ZZ/2\ZZ$, the cocycle $\delta_\sigma = \sigma(\phi_{\bar \eta \rightsquigarrow \bar z}) \circ \phi_{\bar \eta \rightsquigarrow \bar z}^{-1}$ plays a role similar to the factor $g_{\sigma}$.
\end{rem}

\Mysubsubsection \label{subsub:finalProof}
We now establish the main result of the article, which follows from all the results collected in the previous sections: the compatiblity of some local, stack inertia and tangential Galois actions (resp. $\rho_{\bar z}^I$, $\rho_{\vec s}^I$ and $\rho_{\vec s}$ in \S \ref{subsub:defGalAcIn}), the specific action by cyclotomy-conjugacy of \S \ref{subsub:defactcyclo}, the Galois-invariant tangential morphisms of \S \ref{subsub:GqActKnuMor} and the generic degeneracy of $G$-covers of \S \ref{subsub:degGcurve}.

\begin{theo}\label{theo:GalActStrg}
	Let $I$ be a cyclic stack inertia group of $\M_{g,[m]}$. Then the tangential $\Gq$-actions on $\pi_1(\M_{g,[m]}\otimes \bar{\QQ},\vec s)$ are given by $\chi$-conjugacy on $I=\langle\gamma\rangle$:
	\begin{equation*}
		\rho_{\vec s}(\sigma).\gamma = \delta_\sigma\, \gamma^{\chi(\sigma)}\, \delta_\sigma^{-1}
	\end{equation*}
	where $\delta_\sigma$ is an étale path of $\pi_1(\M_{g,[m]}\otimes \bar \QQ,\vec s)$.
\end{theo}

Such a tangential $\Gq$-representation can be explicitly given by a curve of type $X_A$ ($g\geqslant 1$) or $X_B$ ($g\geqslant 2$) of Fig. \ref{Fig:maxDegG1G2}. Recall that the definition of the tangential $\Gq$-action $\rho_{\vec s}$ on $I$ is obtained via a well-defined stack inertia Galois action $\rho_{\vec s}^I$, see Eq.~\eqref{eq:GKI}.

\begin{proof}
        As the case with no étale factorisation is dealt with by Proposition  \ref{prop:GalActTotRam}, the remaining case is when $I$ is the automorphism group of a curve $C\in \mathcal M_{g,[m]}$ with étale factorisation. Since automorphisms of a curves of genus $0$ are without étale factorisation, it can be assumed that $g\geqslant 1$.

	\medskip
	
	 Let $X$ denote the closed point of $\vec S$ and let $X'$ be its partial normalisation at a singular point. Following Proposition \ref{cor:ClutGqComp}, it is then possible to define two tangential $\Gq$-actions $\rho_{\vec s}$ on $\pi_1(\tilde\M_{g,[m]}\otimes \bar \QQ;\vec s) $ and $\rho_{\vec s\, '}$ on $\pi_1(\M_{g-1,[m]+2}\otimes \bar \QQ;\vec s\, ')$ that are compatible with the Knudsen's morphism
	\[
	\begin{tikzcd}
	\pi_1(\M_{g-1,[m]+2}\otimes \bar{\QQ},\vec s\, ') \arrow{r}{\tilde\beta} & \pi_1(\tilde{\M}_{g,[m]}\otimes \bar{\QQ},\vec s).
	\end{tikzcd}
	\]
	Moreover, $\rho_{\vec s}$ and $\rho_{\vec s\, '}$ define two compatible $\Gq$-actions $\rho_{\vec s}^I$ at the level of the stack inertia groups $I_X$ and $I_{X'}$, see Remark \ref{rem:GqIn} \ref{rem:CompActionInertia}. 
	
	Let $\eta$ be the generic point of the special loci $\M_{g,[m]}(I)$, and let $z$ be a specialisation of $\eta$ given by Corollary \ref{Cor:GoodActPoints}. Denoting by $\eta'\in \M_{g-1, [m]+2}(I)$ the generic point of the component containing $\tilde\beta^{-1}(z)$, one obtains a specialisation $\xi=\tilde\beta(\eta')$ of $\eta$ whose normalisation has genus $g-1$ and a $I$-action without étale ramification. The $I$-curves $z\in \bar{\M}_{g,[m]}$ and $\tilde\beta^{-1}(z)\in \tilde{\M}_{g-1,[m]+2}$ give by contraction the curves $C$ in $\bar{\M}_{g,[m]}$ and $\tilde C\in\bar{\M}_{g-1,[m]+2}$ respectively and therefore étale paths $\phi\colon\eta\rightsquigarrow \vec s$ and $\phi'\colon\eta'\rightsquigarrow \vec s\, '$.
	
	 This reduces the description of $\rho_{\vec s}$ on the stack inertia $I$ of ${\M}_{g,[m]}$ to that of $\rho_{\vec s\, '}$ on the stack inertia of ${\M}_{g-1,[m]+2}$: because of the ``without étale factorisation property'', the action $\rho_{\vec s\, '}$ on $I_{\eta'}$ is given by $\chi$-conjugacy in $\M_{g-1,[m]+2}$ by Corollary \ref{theo:GalActTotRam}; by $\beta$-compatibility of the actions and the existence of $\phi$ and $\phi'$, the action $\rho_{\vec s}$ is given by $\chi$-conjugacy on $I_{\xi}$ in $\M_{g,[m]}$. By the property of injectivity under specialisation of Lemma \ref{lem:InSpecGen}, this implies the same property for the $\Gq$-action $\rho_{\vec s}$ on $I$ viewed in the generic automorphism group $I=I_\eta < I_{\xi}$.

	Since the canonical morphism $\pi_1(\M_{g,[m]}\otimes \bar \QQ) \to \pi_1(\tilde \M_{g,[m]}\otimes\bar{\QQ})$ induces an injection at the level of stack inertias, the $\Gq$-action by $\chi$-cyclotomy on $I$ viewed as inertia group in $\pi_1(\tilde \M_{g,[m]}\otimes \bar \QQ)$ then finally implies the same for $I$ as inertia group in $\pi_1(\M_{g,[m]}\otimes \bar \QQ)$.
\end{proof}

\Mysubsubsection \label{subsubsec:opening} 
	By analogy with the Deligne-Mumford stratification, results and methods of this article encourage to lead further studies of the arithmetic of the stack inertia stratification (see \cite{DOUAI06} for a description), either by describing the Galois action for higher non-cyclic stack inertia strata, or by describing the conjugacy factors $\delta_\sigma$ in the $\chi$-conjugacy action of Theorem \ref{theo:GalActStrg}.
	
	\medskip
	
	For the Deligne-Mumford stratification, Grothendieck-Murre theory implies that the tangential $\Gq$-actions on $\pi_1^{et}(\M_{g, [m]}\otimes\bar{\QQ})$ is given by $\chi$-conjugacy on the divisorial inertia groups $I_D,\ D\in \partial \bar{\M}_{g, [m]}$, while the \emph{stratification by topological type $(g,m)$} -- given by the Knudsen morphisms -- reduces the description of these actions to the $4$ stratas of modular dimension $1$ and $2$ only. A finer description of the conjugacy factor is obtained by comparing $\Gq$-actions on different topological strata via Knudsen clutching morphisms, see for example \cite{NAK96} in the case of the clutching $\bar\M_{g_1,m_1}\times \bar\M_{g_2,m_2}\to \bar\M_{g_1+g_2,m_1+m_2-2}$.
	
	The stack inertia stratification is given by the decreasing dimensional locus $\M_{g,[m]}(G_i)\subset \M_{g,[m]}(G_{i-1})$, where $G_i> G_{i+1}$ and $G_0=\{Id\}$, see \cite{LMB00} Theorem 11.5. We show that \emph{the cyclic stack inertia stratification is given by the branch data $\bf kr$ of \S \ref{subsub:GDefGenTypeMarked}}, thus the corresponding strata $\M_{g,[m],{\bf kr}}(\ZZ/n\ZZ)$; in \S \ref{subsub:finalProof}, we compare two stack inertia $\Gq$-actions 
	\[
		\rho_{\vec s,x}^I\colon \Gq\to \Aut[I_x]\quad \text{and} \quad \rho_{\vec s\, ',y}^I\colon \Gq\to \Aut[I_y]
	\]
	on the automorphism groups $I_x\simeq I_y$ of curves $x\in \M_{g,[m],{\bf kr}}(\ZZ/n\ZZ)$ and $y\in \M_{g-1,[m]+2,{\bf kr'}}(\ZZ/n\ZZ)$ \emph{of stack strata of different types} -- i.e with distinct $\ZZ/n\ZZ$-invariants (genus of the quotients $g_x'\neq g_y'$, and branch data ${\bf kr}\neq{\bf kr'}$). This process deserves the name of \emph{inertial limit Galois action}. Developing a combinatorial description of the geometry of the cyclic stack inertia stratification should lead to a finer description of these inertial limit Galois actions, for example by \emph{comparing the conjugacy factors of the prime and general cyclic stratas}.
	
	\bigskip
	
	In another direction, and following a long Geometric Galois Action tradition, this $\chi$-conjugacy Galois action also motivates the search of \emph{new stack inertia $\Gq$-equations in higher genus} -- see \cite{NAKTSU03} and \cite{SCH06} in genus $0$.

\vfill

\section*{Acknowledgements}
B.~Collas thanks Prof.~H.~Nakamura for his encouragements at the beginning of the development of this manuscript. The first author has benefited from many fruitful environments during the preparation of this paper, and accordingly thanks Prof.~Moshe Jarden and \emph{Tel Aviv University}, Prof.~Hossein Movasati and the \emph{IMPA} for their hospitality.

\newpage
\bigskip
%

\providecommand{\bysame}{\leavevmode\hbox to3em{\hrulefill}\thinspace}

\begin{center}
	$\star$\\
	$\star\hphantom{\star}\star$\\
	
	\vspace*{\fill}
\end{center}
\begin{multicols}{2} 
	\centering
	\small
	\textsc{Benjamin Collas}\\
	-- --\\
	
	\textsc{Mathematisches Institut}\\
	\textsc{ Westfalische Wilhelms-Universität Münster}\\
	Einsteinstr. 62,\\
	D-48149 Münster (Deutschland)\\ 
	\medskip
	\emph{email :} collas@math.cnrs.fr
	
	\columnbreak
	
	\textsc{Sylvain Maugeais}\\
	-- --\\
	\textsc{Laboratoire manceau de Mathématiques}\\
	\textsc{Université du Maine}\\
	Av. Olivier Messiaen, BP 535 \\
	72017 Le Mans Cedex (France)\\
	\medskip
	\emph{email :} sylvain.maugeais@univ-lemans.fr
\end{multicols}	
\end{document}